\newtheorem{theorem}{Theorem}[section]
\newtheorem{proposition}[theorem]{Proposition}
\newtheorem{corollary}[theorem]{Corollary}
\newtheorem*{theorem*}{Theorem}
\newenvironment{manualtheorem}[1]{%
  \manualtheoreminner
}{\endmanualtheoreminner}
\newtheorem{lemma}[theorem]{Lemma}
\theoremstyle{definition}
\newtheorem{definition}[theorem]{Definition}
\newtheorem*{definition*}{Definition}
\newtheorem*{proposition*}{Proposition}
\newtheorem*{question*}{Question}
\newtheorem{question}[theorem]{Question}
\theoremstyle{remark}
\newtheorem{remark}[theorem]{Remark}
\newtheorem{example}[theorem]{Example}
\numberwithin{equation}{section}
\title{On finite $p$-groups with powerful subgroups}
\author{James Williams}
\address{School of Mathematics, University of Bristol, Bristol BS8 1UG, UK, and Heilbronn
Institute for Mathematical Research, Bristol, UK}
\email{j.l.i.williams@bristol.ac.uk}
\date{\today}
\begin{document}

\maketitle
\begin{abstract}
In this paper we investigate the structure of finite $p$-groups with the property that every subgroup of index $p^i$ is powerful for some $i$. For odd primes $p$, we show that under certain conditions these groups must be potent. Then, motivated by a question of Mann, we investigate in detail the case when all maximal subgroups are powerful. We show that for odd $p$ any finite $p$-group $G$ with all maximal subgroups powerful has a regular power structure - with precisely one exceptional case which is a $3$-group of maximal class and order $81$. To show this counterexample is unique we use a computational approach. We briefly discuss the case $p=2$ and some generalisations.
\end{abstract}

\section{Introduction}
Loosely speaking, the purpose of this paper is to study $p$-groups which contain large subgroups with some desirable properties and to determine whether these properties are enjoyed by the group itself. In doing so we address questions posed by Mann and Xu. 

More precisely, we study finite $p$-groups $G$ such that every subgroup of index $p^i$ is powerful. Powerful $p$-groups share many properties with abelian $p$-groups, in particular products and powers behave well in powerful $p$-groups. We show that under certain conditions these desirable properties of the subgroups of index $p^i$ extend to the group itself.

The study of $p$-groups is important, as they occur in many problems and have connections to many areas of mathematics. However, finite $p$-groups are known to be complicated. The number of $p$-groups grows very quickly and any hope of a strong classification (akin to the classification of finite simple groups) is thought to be hopeless. However, what has been a successful approach is to understand large families of $p$-groups which are in some sense ``well behaved".

To state our results precisely we need to remind the reader of the following terminology.

Let $G$ be a finite $p$-group. 
\begin{align*}
    &\Omega_{\{k\}}(G)= \{ g \in G \mid g^{p^{k}}=1 \} \quad &\Omega_{k}(G) = \langle \Omega_{\{k\}}(G) \rangle. \\
    &\mho_{\{k\}}(G)=\{ g^{p^{k}} \mid g \in G \} \quad &\mho_{k}(G) = \langle \mho_{\{k\}}(G)\rangle.
\end{align*}

We call $\Omega_{k}(G)$ the $k$th Omega subgroup of $G$. There is a sense in which the subgroups $\mho_{k}(G)$ and $\Omega_{k}(G)$ are dual to each other, hence $\mho_{k}(G)$ is referred to as the $k$th Agemo subgroup of $G$ (Agemo is Omega backwards). 

For $p$-groups in general, we cannot guarantee that $\Omega_{\{k\}}(G)=\Omega_{k}(G)$, nor that $\mho_{\{k\}}(G)=\mho_{k}(G)$. In words, the product of two elements of order at most $p^k$ can have order greater than $p^k$. Similarly the product of two $p^{k}$th powers need not be a $p^{k}$th power. 

If we turn our attention to a well understood family of finite $p$-groups - the abelian groups - we notice that they satisfy the following three conditions. 

\begin{align}
 \mho_{i}(G) &=\{g^{p^{i}} \mid g \in G \}. \label{eqn pth power} \\
      \Omega_{i}(G) &= \{g\in G \mid o(g) \leq p^{i} \}. \label{eqn omega conditon} \\
 |G:\mho_{i}(G)| &= | \Omega_{i}(G)| \label{eqn index condition}.
\end{align}

In \cite{phallcontribtothetheoryofgroupsofprimepowerorder} Hall showed that a large family of finite $p$-groups known as regular $p$-groups also satisfy these properties. Motivated by this, we say that any group which satisfies these three properties enjoys a \textit{regular power structure}.

The work of Hall in \cite{phallcontribtothetheoryofgroupsofprimepowerorder} initiated the study of power structure in finite $p$-groups. Since then, many families of $p$-groups have been shown to have a regular power structure, and even more have been shown to enjoy some subset of these properties. 

One such family of groups known to have a regular power structure for odd primes $p$ are the powerful $p$-groups. This family of groups was introduced in the paper \cite{powpgroups1} by Lubotzky and Mann, and has seen widespread applications and generated a huge amount of further research and generalisations (for instance \textit{potent} $p$-groups \cite{potentpgroups} ). Therefore it is natural to ask, if all maximal subgroups of a finite $p$-group $G$ are powerful, does $G$ enjoy any of the same nice properties. This then leads  naturally to the generalisation of all subgroups of index $p^i$ being powerful, and motivates the following definition.
\begin{definition*}
Let $G$ be a finite $p$-group. We say that $G$ is an $\mathcal{M}_i$ group if all subgroups of index $p^i$ in $G$ are powerful.
\end{definition*}

There is a family of finite $p$-groups called \textit{potent} $p$-groups which can be thought of as a generalisation of powerful $p$-groups. (We  recall their definition in Definition \ref{defn potent p-group}.) In \cite{arganbrightpowcommstruc} Arganbright began the study of what we now call potent $p$-groups. Their theory is developed further by  Gonz\'{a}lez-S\'{a}nchez and Jaikin-Zapirain in \cite{potentpgroups}, where among other things it is shown that for odd primes $p$, potent $p$-groups have a regular power structure. 

For our first main result, we prove the following theorem which says that under certain conditions an $\mathcal{M}_i$ group is a potent $p$-group.

\begin{manualtheorem}{A}
\label{intro theorem i <= p-3 then potent}
Let $p$ be an odd prime and $G$ be a finite $p$-group. If $i \leq p-3$ and $G$ is an $\mathcal{M}_{i}$ group, then $G$ is a potent $p$-group.
\end{manualtheorem}
This theorem captures a relationship between the prime $p$, the index $p^i$, and the power structure properties of the group.  In a similar way, we can capture a relationship with the minimal number of generators of the group. Letting $d(G)$ be the minimal number of generators of $G$, we also prove the following result.

\begin{manualtheorem}{B}
\label{intro theorem rank then powerful}
Let $G$ be a finite $p$-group. If $G$ is a $\mathcal{M}_i$ group with $d(G) \geq i+2$, then $G$ is powerful.
\end{manualtheorem}

Turning our attention to the regular power structure properties, as both powerful and potent $p$-groups have regular power structure, we deduce that for $p>3$ all $\mathcal{M}_1$ $p$-groups have a regular power structure. 

The main part of the paper focuses on the study of $\mathcal{M}_1$ groups. We recall a question of Mann from \cite[Question 68]{berkovichvol1} which is a motivation for this:
\begin{question*}[Mann] Study the $p$-groups, $p>2$, all of whose (a) subgroups of index $p$ (of index $p^2$) are powerful (b) subgroups of indices $p$ and $p^2$ are powerful.
\end{question*}

We remark that to the best of our knowledge there are no published results about $p$-groups all of whose maximal subgroups are powerful. 

For $p>3$ it follows from Theorem \ref{intro theorem i <= p-3 then potent} that $\mathcal{M}_1$ groups are potent and thus have a regular power structure. As often happens, $p=3$ proves to be the challenging case.

We show that for $p=3$, $\mathcal{M}_1$ groups satisfy condition \ref{eqn omega conditon} and condition \ref{eqn pth power}. We believe our approach to establishing condition \ref{eqn pth power} is unique and may have other uses; to prove it we reduce to a case where we can apply a result of Mann from \cite{mannpowstructure1}. 

Our most surprising result comes when studying condition \ref{eqn index condition}. In this case we find that $\mathcal{M}_1$ $3$-groups satisfy the condition, except for one specific $3$-group of order $81$ and maximal class. Proving the uniqueness of this example is a large part of the paper and our approach makes use of the computer algebra package GAP \cite{GAP4.10.2}.

Our main theorem is stated below.
\begin{manualtheorem}{C}
\label{intro theorem main}
Let $G$ be an $\mathcal{M}_1$ $p$-group and $p$ an odd prime. Then $G$ has a regular power structure, unless $G$ is the unique $\mathcal{M}_1$ group of order $81$ and maximal class, in which case condition \ref{eqn index condition} fails to hold.
\end{manualtheorem}

We briefly discuss some possible generalisations of this theorem in Remark \ref{further remark on what happens to pow structure if i is increased}.

A large motivation for this work is to try to better understand the families of $3$-groups with regular power structure. In \cite{xusurveypaper}, Xu suggests that the problem of determining all irregular $p$-groups with a regular power structure is likely to be very difficult, and so poses some problems with restrictions such as determining the $2$-generator $3$-groups with regular power structure.  

The work in this paper is then a contribution towards better understanding the $3$-groups with regular power structure. In particular for $3$-groups of order strictly larger than $81$ we have found a new family of $3$-groups with regular power structure.

\vspace{3mm}

\noindent \textbf{Notation.}  Our notation is standard. For a group $G$, we denote the Frattini Subgroup by $\Phi(G)$. Our commutators are left normed. We define $\gamma_{1}(G)=G$, and then for $i>1$ $\gamma_{i}(G)=[\gamma_{i-1}(G),G]$.  We use bar notation to denote images under a quotient map and in all cases we shall we make it explicit what we are quotienting by. We denote the minimal number of generators of $G$ by $d(G)$ and the exponent of $G$ by $\exp G$.

\section*{Acknowledgements}
The author would like to thank Dr Gareth Tracey and Dr Tim Burness for their helpful advice and comments on this work. 

\section{Preliminaries}
\label{section preliminaries}
For the convenience of the reader we collect here the basic definitions and results that will be used in this paper. A reader familiar with powerful and potent $p$-groups may skip this section.

Powerful $p$-groups were introduced by Lubotzky and Mann in \cite{powpgroups1}. We recall their definition below. 
\begin{definition}
For odd primes $p$, a $p$-group $G$ is \textit{powerful} if $[G,G] \leq \mho_{1}(G)$. A $2$-group $G$ is \textit{powerful} if $[G,G] \leq \mho_{2}(G)$.
\end{definition}

The property of being powerful is preserved by quotients (see \cite[Lemma 2.2(i)]{analyticpropgroups}), but not necessarily when taking subgroups.

In this paper we make use of several properties of powerful $p$-groups, often without explicit mention. 

\begin{proposition}
Let $G$ be a powerful $p$-group with $p$ odd and $k, j$  non-negative integers. Then the following statements hold:
\begin{enumerate}[(i)]
    \item $[\mho_{k}(G),G] \leq \mho_{k+1}(G).$
    \item $\gamma_{k}(G) \leq \mho_{k-1}(G).$
    \item $\mho_{j}(\mho_{k}(G)) = \mho_{j+k}(G).$
\end{enumerate}
\end{proposition}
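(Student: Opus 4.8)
The plan is to reduce all three statements to a single structural fact about powerful $p$-groups, namely that the set of $p^k$th powers is already a subgroup. I would organise this around the notion of a \emph{powerfully embedded} subgroup: a normal subgroup $N \trianglelefteq G$ is powerfully embedded in $G$ if $[N,G] \le \mho_1(N)$, so that $G$ is powerful exactly when it is powerfully embedded in itself. The engine of the whole argument is the lemma that if $N$ is powerfully embedded in $G$ then $\{x^p : x \in N\}$ is precisely $\mho_1(N)$ and is again powerfully embedded in $G$. I expect this lemma to be the main obstacle, and I would prove it using the Hall--Petrescu collection formula $x^p y^p = (xy)^p\, c_2^{\binom{p}{2}} \cdots c_{p-1}^{\binom{p}{p-1}} c_p$ with $c_i \in \gamma_i(\langle x, y\rangle)$. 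Since $p$ is odd, $p \mid \binom{p}{i}$ for $2 \le i \le p-1$, and each $c_i$ lies in $\gamma_2(N) \le [N,G] \le \mho_1(N)$; tracking the correction terms into successively deeper powerfully embedded subgroups then shows that the product of two $p$th powers is again a $p$th power, so $\{x^p : x \in N\}$ is closed under multiplication and hence equals $\mho_1(N)$.

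Granting this engine lemma, I would set up a short induction on $k$ proving simultaneously that $\mho_k(G)$ is powerfully embedded in $G$ and that $\mho_k(G) = \{g^{p^k} : g \in G\}$ is the full set of $p^k$th powers. The base case $k = 0$ is just the hypothesis that $G$ is powerful, and the inductive step applies the engine lemma to $N = \mho_k(G)$: the set $\{(g^{p^k})^p : g \in G\} = \{g^{p^{k+1}} : g \in G\}$ equals $\mho_1(\mho_k(G))$, which is powerfully embedded and, being generated by the $p^{k+1}$th powers, equals $\mho_{k+1}(G)$. This induction delivers $\mho_1(\mho_k(G)) = \mho_{k+1}(G)$ for free, and part (i) is then immediate: powerful embedding of $\mho_k(G)$ gives $[\mho_k(G), G] \le \mho_1(\mho_k(G)) = \mho_{k+1}(G)$.

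Finally, (ii) and the general case of (iii) follow formally. For (ii) I would induct on $k$: the base $\gamma_1(G) = G = \mho_0(G)$ is trivial, and $\gamma_{k+1}(G) = [\gamma_k(G), G] \le [\mho_{k-1}(G), G] \le \mho_k(G)$, using the inductive hypothesis and (i). For (iii), note that $\mho_k(G)$ is not only powerfully embedded in $G$ but powerful in its own right, since $[\mho_k(G), \mho_k(G)] \le [\mho_k(G), G] \le \mho_1(\mho_k(G))$; applying the set-of-powers fact inside the powerful group $H = \mho_k(G)$ gives $\mho_j(H) = \{h^{p^j} : h \in H\}$, and substituting $h = g^{p^k}$ yields $\{g^{p^{j+k}} : g \in G\} = \mho_{j+k}(G)$, which is exactly the claim $\mho_j(\mho_k(G)) = \mho_{j+k}(G)$. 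The only genuine difficulty is the collection argument in the engine lemma; once the set of $p$th powers is understood, (i)--(iii) are bookkeeping.
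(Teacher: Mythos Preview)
Your proposal is correct and follows the standard route via powerfully embedded subgroups and the Hall--Petrescu formula. The paper itself does not supply a proof of this proposition at all: it simply states the result and refers the reader to \cite[Theorem 11.10]{khukhropautfinitepgrps}. The argument you outline---first establishing that $\mho_1(N)$ is powerfully embedded and coincides with $\{x^p : x\in N\}$ whenever $N$ is powerfully embedded, then bootstrapping by induction on $k$---is essentially the proof given in Khukhro's book (and goes back to Lubotzky--Mann \cite{powpgroups1}). So there is nothing to compare: you have reconstructed the textbook proof that the paper is quoting.

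One small remark on your engine lemma sketch: the Hall--Petrescu correction term $c_p$ carries exponent $\binom{p}{p}=1$, not a multiple of $p$, so it does not automatically land in $\mho_1(\gamma_2(N))$. You handle this implicitly by saying the correction terms are ``tracked into successively deeper powerfully embedded subgroups'', and indeed the standard argument deals with it either by inducting on $|N|$ and passing to $N/\mho_1([N,G])$, or by first proving separately that $[N,G]$ and $\mho_1(N)$ are themselves powerfully embedded. If you were writing this out in full, that is the place where the bookkeeping needs a sentence or two of care.
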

For a proof of the above see \cite[Theorem 11.10]{khukhropautfinitepgrps}.

\begin{proposition}
\label{proposition G= a1,..,ar then G^p = a_1^p, a_r^p}
Let $G= \langle a_1, \dots, a_r \rangle$ be a powerful $p$-group. Then $\mho_{1}(G)= \langle a_1^{p}, \dots, a_r^{p} \rangle$. 
\end{proposition}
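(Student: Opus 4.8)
The plan is to set $N = \langle a_1^{p}, \dots, a_r^{p} \rangle$ and prove $N = \mho_1(G)$. The inclusion $N \leq \mho_1(G)$ is immediate, since each $a_i^{p}$ is a $p$th power, so the entire content lies in the reverse inclusion. The difficulty is that an arbitrary $p$th power $g^{p}$, with $g$ a word in the $a_i$, does not expand \emph{on the nose} into a product of the $a_i^{p}$: the discrepancy is a product of commutators. My strategy is to first compute $\mho_1(G)$ modulo $\mho_2(G)$, where oddness of $p$ makes the $p$th power map behave additively, and then to absorb the error term $\mho_2(G)$ by a Frattini non-generator argument.

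First I would pass to $\bar G = G/\mho_2(G)$. Powerfulness passes to quotients, so $\bar G$ is powerful, and the preceding proposition gives the two facts I need: by part (i) with $k=1$ we have $[\mho_1(G),G] \leq \mho_2(G)$, so $\mho_1(\bar G)$ is central in $\bar G$; and by part (iii) we have $\mho_1(\mho_1(\bar G)) = \mho_2(\bar G) = 1$. In particular $[\bar G,\bar G] \leq \mho_1(\bar G) \leq Z(\bar G)$, so $\bar G$ has nilpotency class at most $2$.

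Next I would show the $p$th power map is a homomorphism on $\bar G$. For a class-$2$ group the collection formula reads $(xy)^{p} = x^{p} y^{p} [y,x]^{\binom{p}{2}}$ with $[y,x]$ central. Here is the crucial use of the hypothesis that $p$ is odd: $\binom{p}{2} = p\cdot\tfrac{p-1}{2}$ is divisible by $p$, so $[y,x]^{\binom{p}{2}}$ is a $p$th power of an element of $[\bar G,\bar G] \leq \mho_1(\bar G)$ and therefore lies in $\mho_1(\mho_1(\bar G)) = \mho_2(\bar G) = 1$. Hence $(xy)^{p} = x^{p}y^{p}$ in $\bar G$, and $x \mapsto x^{p}$ is a homomorphism from $\bar G$ onto $\mho_1(\bar G)$. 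Since the $\bar a_i$ generate $\bar G$, its image $\mho_1(\bar G)$ is generated by the $\bar a_i^{p}$, which translating back (and using $\mho_2(G) \leq \mho_1(G)$) gives $\mho_1(G) = N\,\mho_2(G)$.

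Finally I would remove the factor $\mho_2(G)$. The subgroup $H = \mho_1(G)$ is itself powerful, because $[\mho_1(G),\mho_1(G)] \leq [\mho_1(G),G] \leq \mho_2(G) = \mho_1(\mho_1(G))$ by parts (i) and (iii). For an odd powerful group $\Phi(H) = [H,H]\,\mho_1(H) = \mho_1(H) = \mho_2(G)$, so the equation $H = N\,\mho_2(G) = N\,\Phi(H)$ together with $N \leq H$ forces $H = N$ by the non-generator property of the Frattini subgroup, giving $\mho_1(G) = N = \langle a_1^{p},\dots,a_r^{p}\rangle$. The main obstacle is precisely the passage that makes the $p$th power map additive: it is only the oddness of $p$, through $p \mid \binom{p}{2}$, that kills the commutator correction term modulo $\mho_2(G)$, after which the Frattini argument does the bookkeeping needed to lift the conclusion from $\bar G$ back to $G$.
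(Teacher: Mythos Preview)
The paper does not give its own proof of this proposition; it simply cites Khukhro, \emph{$p$-Automorphisms of Finite $p$-Groups}, Theorem~11.11. Your argument is correct and is in fact essentially the standard one found there: reduce modulo $\mho_2(G)$ to a class-$2$ situation where the $p$th power map is a homomorphism, read off $\mho_1(G)=N\mho_2(G)$, and then strip the $\mho_2(G)$ factor by a Frattini argument using that $\mho_1(G)$ is itself powerful.

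Two small remarks. First, your proof uses $p$ odd (through $p\mid\binom{p}{2}$); the proposition as stated in the paper does not carry that hypothesis, but since the surrounding Proposition~2.1 is only stated for odd $p$ and the paper is concerned with odd primes, this is harmless in context. Second, your appeal to part~(iii) to get $\mho_1(\mho_1(\bar G))=1$ invokes the non-trivial inclusion $\mho_1(\mho_1(\bar G))\le\mho_2(\bar G)$; one can sidestep any worry about circularity here by instead observing that in a class-$2$ group $[y,x]^{p}=[y^{p},x]\in[\mho_1(\bar G),\bar G]\le\mho_2(\bar G)=1$ directly from part~(i), which already kills the correction term $[y,x]^{\binom{p}{2}}$.
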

For a proof of this  see \cite[Theorem 11.11]{khukhropautfinitepgrps}. For an introduction to the theory of powerful $p$-groups, we recommend Chapter 11 of \cite{khukhropautfinitepgrps}.

\begin{theorem}
\label{theorem powerful p-groups regular power structure}
For odd primes $p$, powerful $p$-groups have a regular power structure.
\end{theorem}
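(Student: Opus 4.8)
The plan is to verify the three defining conditions \eqref{eqn pth power}, \eqref{eqn omega conditon}, \eqref{eqn index condition} of a regular power structure in turn, establishing \eqref{eqn pth power} first since the other two can be deduced from it together with its dual. Throughout I would exploit two structural facts already recorded: that $\mho_j(\mho_k(G)) = \mho_{j+k}(G)$, and that each $\mho_k(G)$ is itself powerful — the latter follows from parts (i) and (iii) of the Proposition above, since $[\mho_k(G),\mho_k(G)] \leq [\mho_k(G),G] \leq \mho_{k+1}(G) = \mho_1(\mho_k(G))$.

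For \eqref{eqn pth power} I would first reduce to the case $i=1$ by induction on $i$. Writing the set of $p^i$-th powers as the set of $p$-th powers of the set of $p^{i-1}$-th powers, the inductive hypothesis identifies the latter with the powerful subgroup $\mho_{i-1}(G)$; the $i=1$ case applied inside $\mho_{i-1}(G)$ then gives the set of its $p$-th powers as $\mho_1(\mho_{i-1}(G)) = \mho_i(G)$ by part (iii). The heart of the matter is therefore the claim that in a powerful $p$-group the set $\{x^p : x \in G\}$ is already the subgroup $\mho_1(G)$, equivalently that a product of two $p$-th powers is a $p$-th power. I would prove this by induction on $|G|$: passing to $G/N$ for a central subgroup $N$ of order $p$ lying in $\mho_1(G)$, any $w \in \mho_1(G)$ becomes a single $p$-th power modulo $N$, so $w = x^p n$ with $n \in N$, and one adjusts $x$ to absorb $n$. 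Controlling the discrepancy $(xz)^p (x^p)^{-1}$ is carried out with the Hall–Petrescu collection formula $(xz)^p = x^p z^p \prod_{j\geq 2} c_j^{\binom{p}{j}}$, where powerfulness forces the commutator terms $c_j$ deep into the $\mho$-filtration and the divisibility of $\binom{p}{j}$ for $1<j<p$ is used; this is exactly the point at which the hypothesis that $p$ is odd enters.

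Condition \eqref{eqn omega conditon} — that the elements of order at most $p^i$ form the subgroup $\Omega_i(G)$ — I would establish by the dual argument, again by induction on $|G|$ and the collection formula, showing that a product of two elements killed by $p^i$ is killed by $p^i$, so that $\Omega_{\{i\}}(G) = \Omega_i(G)$. For the index condition \eqref{eqn index condition} I would then count fibres of the power map $\pi_i \colon G \to \mho_i(G)$, $x \mapsto x^{p^i}$, which is surjective by \eqref{eqn pth power}. The crux is to show every fibre has the same size, namely $|\Omega_{\{i\}}(G)| = |\Omega_i(G)|$: if $x^{p^i} = y^{p^i}$ then, since powers and products are well behaved across the $\mho$-filtration of a powerful group, $(y^{-1}x)^{p^i} = 1$, so the fibre over $x^{p^i}$ is exactly the coset $x\,\Omega_{\{i\}}(G)$. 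Constant fibre size then yields $|G| = |\mho_i(G)|\,|\Omega_i(G)|$, which is \eqref{eqn index condition}. (One ingredient is immediate: $\Phi(G) = \mho_1(G)[G,G] = \mho_1(G)$, so $G/\mho_1(G)$ is elementary abelian and $|G:\mho_1(G)| = p^{d(G)}$.)

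I expect the main obstacle to be this last step, specifically the implication $x^{p^i} = y^{p^i} \Rightarrow (y^{-1}x)^{p^i} = 1$: it requires the power map to respect inverses and products tightly enough to transfer an equality of $p^i$-th powers into the vanishing of a single $p^i$-th power, and its justification leans on the full strength of the multiplicative regularity established in \eqref{eqn pth power}. It is precisely the analogue of this fibre-counting step that the author later finds can fail for $\mathcal{M}_1$ groups, producing the exceptional $3$-group of order $81$.
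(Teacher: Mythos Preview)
The paper does not actually prove this theorem; it records it as background and cites the literature: condition \eqref{eqn pth power} is attributed to Lubotzky and Mann, while conditions \eqref{eqn omega conditon} and \eqref{eqn index condition} are attributed to Wilson (using a result of H\'ethelyi and L\'evai), with later shorter proofs by Fern\'andez-Alcober and an independent argument by Mazur. So there is no in-paper proof to compare against directly, only the shape of the cited arguments.

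Your outlines for \eqref{eqn pth power} and \eqref{eqn omega conditon} --- reduce to $i=1$ via $\mho_j(\mho_k(G))=\mho_{j+k}(G)$, then induct on $|G|$ using Hall--Petrescu --- are essentially the standard route and match what one finds in those references. The genuine gap is in your plan for \eqref{eqn index condition}. You want the fibres of $x\mapsto x^{p^i}$ to be exactly the left cosets of $\Omega_i(G)$, which requires both $x^{p^i}=y^{p^i}\Rightarrow (y^{-1}x)^{p^i}=1$ and its converse. You correctly flag this as the crux, but it does \emph{not} follow from \eqref{eqn pth power}: knowing that the set of $p^i$-th powers is a subgroup tells you nothing about which elements share a given $p^i$-th power, and a bare appeal to ``powers and products being well behaved across the $\mho$-filtration'' is not an argument. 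None of the cited proofs proceed this way; they establish that all fibres have the same \emph{cardinality} as $\Omega_{\{i\}}(G)$ by more indirect means (inductive counting, or an explicitly constructed bijection between $\Omega_{\{i\}}(G)$ and a set of coset representatives for $\mho_i(G)$), without ever claiming the fibres are cosets. As written, your proposal for \eqref{eqn index condition} is a statement of what one would like to be true rather than a mechanism for proving it.
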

The first power structure condition was established in \cite[Proposition 1.7]{powpgroups1}. The second and third power structure conditions  were first established in \cite[Theorem 3.1]{wilsonpowerstrucpowpgroups}, where the proof of the third condition relied on a result of H\'ethelyi and L\'evai \cite[Corollary 2]{onelemsorderpinpowpgroups}. A shorter proof of conditions \ref{eqn omega conditon} and \ref{eqn index condition} were given by Fern\'{a}ndez-Alcober in \cite{omegasubgroupsofpowerfulpgroups}, see Theorem 1(iii) and Theorem 4. We note also that an independent proof of the fact that $|\Omega_{\{k\}}(G)| = |G: \mho_{k}(G)|$ for a powerful $p$-group $G$ was given by Mazur in \cite[Theorem 1]{onpowersinpowerfulpgroupsmazur}. \par

A generalisation of powerful $p$-groups is the notion of a potent $p$-group. 
\begin{definition}
\label{defn potent p-group}
For an odd prime $p$, a finite $p$-group $G$ is said to be potent if $\gamma_{p-1}(G) \leq \mho_{1}(G)$. A finite $2$-group $G$ is said to be \textit{potent} if $[G,G] \leq \mho_{2}(G)$.
\end{definition}
Notice that for $p=2$ and $p=3$ the notions of potent and powerful coincide. 

In \cite{potentpgroups} it is proved that for odd primes potent $p$-groups also have regular power structure. 
\begin{theorem}
\label{theorem potent p groups regular power structure}
For odd primes $p$, potent $p$-groups have a regular power structure.
\end{theorem}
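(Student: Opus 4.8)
The plan is to reconstruct the cited result by verifying the three defining conditions of a regular power structure in turn, using the potency hypothesis $\gamma_{p-1}(G) \le \mho_1(G)$ to control the correction terms produced by the Hall--Petrescu collection formula. Throughout I would argue by induction on $|G|$, passing to quotients $G/N$ for suitable central $N$; this is legitimate because potency, unlike the subgroup situation, is inherited by quotients (both $\gamma_{p-1}$ and $\mho_1$ map onto their counterparts). I would establish condition \ref{eqn pth power} first, then condition \ref{eqn omega conditon}, and finally deduce condition \ref{eqn index condition} by a counting argument.

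For condition \ref{eqn pth power} it suffices to show that the set of $p$-th powers is closed under multiplication, the general $p^i$ case following by a similar iteration. So let $x,y \in G$, put $H = \langle x,y\rangle$, and apply Hall--Petrescu to write $(xy)^p = x^p y^p \prod_{j=2}^{p} c_j^{\binom{p}{j}}$ with $c_j \in \gamma_j(H)$. For $2 \le j \le p-1$ the binomial coefficient $\binom{p}{j}$ is divisible by $p$, so each such factor is a $p$-th power of an element of $\gamma_2(H)$; the final term lies in $\gamma_p(H) \le \gamma_{p-1}(G) \le \mho_1(G)$ by potency. Rearranging expresses $x^p y^p$ as a product of $p$-th powers, and the inductive hypothesis applied in a proper central quotient allows me to collapse this product into a single $p$-th power.

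For condition \ref{eqn omega conditon} I would dually show that the elements of order at most $p^i$ form the subgroup $\Omega_i(G)$. Once condition \ref{eqn pth power} is available the power map is well-controlled, and reading the collection formula in the opposite direction lets me bound $o(xy)$ in terms of $o(x)$ and $o(y)$ when both are at most $p^i$, the potency term again being absorbed. Condition \ref{eqn index condition} should then follow by induction on $|G|$: quotienting by a central subgroup of order $p$ and comparing the effect on $\mho_i$ and on $\Omega_i$, the first two conditions guarantee that the induced power map is a bijection between the relevant quotients, which forces the equality $|G:\mho_i(G)| = |\Omega_i(G)|$.

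The main obstacle I anticipate is the closure step in condition \ref{eqn pth power}: absorbing the Hall--Petrescu corrections requires knowing that products of $p$-th powers of elements of $\gamma_2(H)$ are themselves $p$-th powers, which is essentially the statement under proof. The induction must therefore be arranged with care --- on $|G|$, quotienting by a well-chosen central subgroup --- to break this apparent circularity, and at each stage one must confirm that the potency property descends to the quotient being used.
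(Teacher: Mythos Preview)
The paper does not prove this theorem at all: it is quoted as a preliminary result and attributed to Gonz\'alez-S\'anchez and Jaikin-Zapirain \cite{potentpgroups} (where the property is called \emph{power abelian}). There is therefore no ``paper's own proof'' to compare against; you are attempting to reconstruct a result that the author simply imports.

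As for the sketch itself, the overall shape---Hall--Petrescu plus induction on $|G|$, passing to a central quotient---is the right kind of argument, and you correctly identify the structural reason potency helps: the top correction term $c_p$ lands in $\gamma_p(G)\le\gamma_{p-1}(G)\le\mho_1(G)$. However, several steps remain genuinely incomplete. First, knowing $c_p\in\mho_1(G)$ only says $c_p$ lies in the subgroup \emph{generated} by $p$-th powers, not that it \emph{is} a $p$-th power; that distinction is exactly the content of condition~\ref{eqn pth power}, so the circularity you flag is real and you have not explained how the induction actually closes (which minimal normal $N$ do you quotient by, and why is the leftover coset representative a $p$-th power?). Second, the treatment of condition~\ref{eqn omega conditon} (``reading the collection formula in the opposite direction'') is too vague to be a proof: bounding $o(xy)$ from $o(x),o(y)$ via Hall--Petrescu requires controlling the orders of the $c_j$, which needs additional structural input beyond potency alone. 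Third, the proposed argument for condition~\ref{eqn index condition} presupposes that the $p^i$-th power map induces a bijection between suitable quotients, but this is not automatic from conditions~\ref{eqn pth power} and~\ref{eqn omega conditon}; in the literature this step typically uses further facts (e.g.\ that $\mho_1(G)$ is powerfully embedded in $G$ for potent $G$) which you have not established. In short, your outline is plausible but is a plan rather than a proof; the cited paper fills in precisely these gaps.
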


Note that what we refer to as regular power structure is called \textit{power abelian} in \cite{potentpgroups}. 

In \cite{phallcontribtothetheoryofgroupsofprimepowerorder} P. Hall introduced \textit{regular $p$-groups}.  
\begin{definition}
We say that a $p$-group $G$ is \textit{regular} if for any two elements $a, b \in G$ and any $n$ we have that:
$$(ab)^{p^{n}} = a^{p^{n}}b^{p^{n}}S_1^{p^{n}} \cdots S_{t}^{p^{n}},$$
with $S_1, \dots, S_t$ appropriate elements from the commutator subgroup of the group generated by $a$ and $b$.
\end{definition}
In \cite{phallcontribtothetheoryofgroupsofprimepowerorder} P. Hall showed, amongst other things, that regular $p$-groups satisfy the regular power structure properties. For a textbook exposition of this see \cite[Theorem 7.2 (b), (c), (d)]{berkovichvol1}.

There are many known conditions which imply that a $p$-group $G$ is regular. It is clear from the definition that  $p$-groups of exponent $p$ are regular. It is also the case that $p$-groups whose nilpotency class $c$ is strictly less than $p$ are regular (\cite[Theorem 7.1(b)]{berkovichvol1}). 

We will use these facts throughout this paper, often without explicit mention. 

At one point in this paper, we will make use of Fitting's Lemma to bound the nilpotency class of a group.

\begin{lemma}[Fitting]
\label{lemma fitting} Let $G$ be a group and $M$ and $N$ normal nilpotent subgroups of $G$ with nilpotency classes $a$ and $b$ respectively. Then $MN$ is also a nilpotent normal subgroup of $G$, with nilpotency class at most $a+b$.
\end{lemma}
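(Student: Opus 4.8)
The plan is to first dispose of the structural claims and then reduce everything to a single statement about the lower central series. Since $M$ and $N$ are both normal in $G$, their product $MN$ is a subgroup of $G$, and it is again normal in $G$ because a product of normal subgroups is normal; this settles the ``normal subgroup'' part of the conclusion with essentially no work. The substance of the lemma is the class bound, which I would phrase as the assertion $\gamma_{a+b+1}(MN) = 1$. Throughout I will use that $M$ and $N$ are normal in $MN$, that each $\gamma_k(M)$ and $\gamma_k(N)$ is characteristic in $M$ respectively $N$ and hence normal in $G$, and that the hypotheses on the classes read precisely $\gamma_{a+1}(M) = 1$ and $\gamma_{b+1}(N) = 1$.

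The engine of the proof is the standard fact that $\gamma_n(MN)$ is generated by the left-normed commutators $[x_1, x_2, \dots, x_n]$ in which every entry $x_i$ lies in $M \cup N$, which holds because $MN$ is generated by $M \cup N$. I would then prove the following counting lemma by induction on the weight: a left-normed commutator $[x_1, \dots, x_m]$ with each $x_i \in G$, of which at least $k$ entries lie in $M$, belongs to $\gamma_k(M)$. Writing such a commutator as $[w, x_m]$ with $w = [x_1, \dots, x_{m-1}]$, the inductive step splits according to whether $x_m \in M$. If $x_m \notin M$ then $w$ still has at least $k$ entries in $M$, so by induction $w \in \gamma_k(M)$ and $[w, x_m] \in [\gamma_k(M), G] \leq \gamma_k(M)$ by normality; if $x_m \in M$ and $k \geq 2$ then $w$ has at least $k-1$ entries in $M$, so $w \in \gamma_{k-1}(M)$ and $[w, x_m] \in [\gamma_{k-1}(M), M] = \gamma_k(M)$. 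The same statement holds verbatim with $N$ in place of $M$.

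With the counting lemma in hand the conclusion is immediate from pigeonhole. A generating commutator of $\gamma_{a+b+1}(MN)$ has weight $a + b + 1$ and each of its entries lies in $M$ or in $N$; if at most $a$ entries lay in $M$ and at most $b$ in $N$ the total weight would be at most $a + b$, a contradiction. Hence every such commutator has at least $a+1$ entries in $M$, forcing it into $\gamma_{a+1}(M) = 1$, or at least $b+1$ entries in $N$, forcing it into $\gamma_{b+1}(N) = 1$. Either way all the generators of $\gamma_{a+b+1}(MN)$ are trivial, so $\gamma_{a+b+1}(MN) = 1$ and $MN$ has class at most $a + b$.

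I expect the only real obstacle to be the bookkeeping in the counting lemma rather than any conceptual difficulty. One must track the number of $M$-entries correctly as the last entry is stripped off: appending an entry from $M$ should advance the commutator one step deeper into the lower central series of $M$, while appending an entry outside $M$ should leave the depth unchanged, this last being exactly where normality of $\gamma_k(M)$ in $G$ is used. The single delicate point is the degenerate base case $k = 1$ in which the lone $M$-entry sits in the last slot, where the defining relation $\gamma_k(M) = [\gamma_{k-1}(M), M]$ is unavailable and one instead invokes normality of $M$ directly to conclude $[w, x_m] = [x_m, w]^{-1} \in M$. Beyond this the argument is entirely formal.
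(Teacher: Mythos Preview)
Your proof is correct and is the standard textbook argument for Fitting's theorem: reduce to left-normed commutators with entries from $M\cup N$, prove the counting lemma that a weight-$m$ commutator with at least $k$ entries in $M$ lies in $\gamma_k(M)$ (using normality of $\gamma_k(M)$ in $G$ for the step where the last entry is outside $M$), and finish by pigeonhole. The paper does not give its own proof of this lemma at all; it merely cites Hall's \emph{The Theory of Groups}, Theorem~10.3.2, so there is nothing to compare against beyond noting that your argument is essentially the one found in such references.

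One small remark on the ``engine'' step: the cleanest version of the generation statement here is the subgroup form, namely that $\gamma_n(MN)$ is the product of all $[X_1,\dots,X_n]$ with each $X_i\in\{M,N\}$, which follows by induction from $[AB,C]=[A,C]^B[B,C]$ together with normality of $M$ and $N$. Your element-wise phrasing is equivalent in this setting, and in any case your pigeonhole argument applies verbatim to commutators of every weight $\geq a+b+1$, so even the cruder fact that $\gamma_n(H)$ is generated by commutators of weight $\geq n$ in a generating set would suffice.
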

For a proof of Lemma \ref{lemma fitting} see \cite[Theorem 10.3.2]{halltheoryofgroupsbook}.

In Section \ref{section index property p=3} we will attempt to bound the order of some groups. In doing so, we end up considering the order of a $2$-generator group of exponent $3$. We make use of the fact that the largest $2$-generator group of exponent $3$ has order $27$ (See Chapter 18 of \cite{halltheoryofgroupsbook} for an introduction to the Burnside problem and proof of a more general version of this fact).

Finally, we would like to remind the reader that the group $\mho_{n}(G)$ is a \textit{verbal} subgroup and behaves nicely when we take quotients. In particular we have that $$ \mho_{n}\left(\frac{G}{N}\right) \cong \frac{\mho_{n}(G)N}{N}.$$ 

\subsection{A note on GAP code}
In Section \ref{section index property p=3} we will reduce  problems about $3$-groups, to problems about $3$-groups of specific small orders (at most $3^7$). As the groups of order $3^n$ for $n \in \{1, \dots, 7\}$ have been classified and are readily available in computer algebra software such as GAP \cite{GAP4.10.2} through the SmallGroupsLibrary \cite{SmallGrp1.3}, we are then able to concretely deal with these problems. 

Although some of these cases and problems could surely be dealt with directly through various ad hoc arguments, we believe there is a significant aesthetic benefit to reducing to a finite problem and then having the computer deal with it uniformly in one clean sweep. 

For the benefit of the reader, we include alongside this manuscript our GAP code to verify the claims made in Section \ref{section index property p=3}.

\section{On $\mathcal{M}_{i}$ groups}
One of our motivations for this work is to study $p$-groups all of whose maximal subgroups are powerful, these are the $\mathcal{M}_1$ groups. However to begin we study the more general case of $\mathcal{M}_i$ groups; these are the finite $p$-groups for which all subgroups of index $p^i$ are powerful.

In this paper we will often use the fact that if $G$ is an $\mathcal{M}_1$ group, then so are its quotients. This is easily seen, recalling that the property of being powerful is preserved under quotients.

\subsection{$\mathcal{M}_i$ groups and potency}\label{subsection mi groups and potency}

Fundamentally, the behaviour of $p$-groups is determined by the interaction of powers and commutators. For an $\mathcal{M}_i$ group, we are ensuring that all subgroups of index $p^i$ have a good behaviour with regards to the interactions of powers and commutators. It is natural to ask, what does this mean for the group itself.

\begin{proposition}
\label{prop G exponent p mi group class at ost i+1}
Let $G$ be an $\mathcal{M}_i$ group of exponent $p$. Then the nilpotency class of $G$ is at most $i+1$.
\end{proposition}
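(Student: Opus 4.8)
The plan is to exploit the exponent-$p$ hypothesis to replace powerfulness by commutativity, and then to control the nilpotency class by passing to maximal subgroups. Since $G$ has exponent $p$, every subgroup $H \le G$ satisfies $\mho_1(H) = 1$, so $H$ is powerful precisely when it is abelian; thus saying that $G$ is an $\mathcal{M}_i$ group means exactly that every subgroup of index $p^i$ in $G$ is abelian.

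The case $i = 1$ is the model. Assuming $d(G) \ge 2$ (otherwise $G$ is cyclic and there is nothing to prove), any two distinct maximal subgroups $M_1, M_2$ are normal, abelian, and satisfy $M_1 M_2 = G$, so Fitting's Lemma (Lemma \ref{lemma fitting}) gives $\mathrm{cl}(G) \le 2 = i+1$. For general $i$ the whole statement reduces to the following claim: a finite $p$-group of exponent $p$ and class $c$ has a maximal subgroup of class at least $c - 1$. Granting this, I would build a chain $G = G_0 > G_1 > \cdots > G_i$ of successive maximal subgroups along which the class drops by at most $1$ at each step; then $G_i$ has index $p^i$ and is therefore abelian, while $\mathrm{cl}(G_i) \ge \mathrm{cl}(G) - i$, which forces $\mathrm{cl}(G) \le i+1$. (Equivalently one may induct on $i$, using that every maximal subgroup of an $\mathcal{M}_i$ group of exponent $p$ is an $\mathcal{M}_{i-1}$ group of exponent $p$.)

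To prove the claim I would start from a nontrivial left-normed commutator $[x_1, \dots, x_c] \ne 1$ with the $x_j$ drawn from a generating set; this is legitimate because, modulo $\gamma_{c+1}(G) = 1$, weight-$c$ commutators are multilinear on $G/\Phi(G)$. Writing it as $[v, x_c]$ with $v = [x_1, \dots, x_{c-1}]$ forces $v \ne 1$. If the generators occurring in $v$ do not span $G/\Phi(G)$, then they lie in a common maximal subgroup $M$, giving $1 \ne v \in \gamma_{c-1}(M)$ and hence $\mathrm{cl}(M) \ge c-1$. The main obstacle is the complementary few-generator case --- for instance $2$-generated groups of maximal class --- where every nontrivial weight-$(c-1)$ commutator already involves all the generators, so no single maximal subgroup captures $v$. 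Here I would pass to the associated graded Lie ring $L = \bigoplus_k \gamma_k(G)/\gamma_{k+1}(G)$ over $\mathbb{F}_p$, observe that the maximal subgroups correspond to the lines in $L_1 = G/\Phi(G)$, and show by a Vandermonde-type computation that the images of their top terms $\gamma_{c-1}(M)$ span $\gamma_c(G)/\gamma_{c+1}(G)$, so that $\gamma_c(G) \ne 1$ forces $\gamma_{c-1}(M) \ne 1$ for some $M$. Making this spanning argument uniform when the class is large relative to $p$, where $L$ is far from free, is the step I expect to demand the most care.
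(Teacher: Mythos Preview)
Your $i=1$ argument via Fitting is fine, but for general $i$ the proof has a genuine gap: everything is reduced to the claim that a finite $p$-group of exponent $p$ and class $c$ has a maximal subgroup of class at least $c-1$, and this claim is never established. The sketched Lie-ring argument is not convincing. Maximal subgroups correspond to hyperplanes in $L_1$, and the images of the $\gamma_{c-1}(M)$ in the graded piece $L_{c-1}$ are not governed by any clean Vandermonde system once $c$ is large relative to $p$ and $L$ carries relations beyond anticommutativity and Jacobi; in the $2$-generated maximal-class situation you yourself single out, every basic commutator of weight $c-1$ already involves both generators, so your preliminary ``commutator lies in a proper subgroup'' reduction contributes nothing there, and you are left with precisely the case you admit you cannot handle. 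Whether the claim is even true in this generality is not clear --- the standard results (minimal non-abelian groups have class $2$; MacDonald's theorem that all proper subgroups of class at most $2$ forces class at most $3$) only cover $c \leq 4$.

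The paper's proof sidesteps all of this by bounding $|G|$ rather than the class. Take the least $j \leq i$ such that every subgroup of index $p^j$ is abelian; then some subgroup $H$ of index $p^{j-1}$ is non-abelian while all its maximal subgroups are abelian, so $H$ is a minimal non-abelian $p$-group of exponent $p$ and hence $|H| \leq p^3$. This gives $|G| \leq p^{j-1}\cdot p^3 \leq p^{i+2}$, and the class bound follows immediately from the order. There is no induction on $i$ and no need to understand how the nilpotency class behaves on passage to maximal subgroups.
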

\begin{proof}
If the group is abelian the result is clear, hence we assume that $G$ is not abelian. All subgroups in $G$ of index $p^i$ are powerful of exponent $p$, and so they are abelian. Let $j$ be the smallest integer such that all subgroups of index $p^j$  in $G$ are abelian, and note that $j \geq i$. Then there is some subgroup $H$ in $G$ with index $p^{j-1}$ and $H$ is not abelian. However every maximal subgroup of $H$ is abelian, and so $H$ is a minimal non-abelian $p$-group of exponent $p$. As  $H$ is minimal non-abelian it follows it must be $2$-generator and have nilpotency class $2$ with $|[H,H]|=p$. Thus $|H| \leq p^3$.  (See \cite[Exercise 8a]{berkovichvol1} for a classification of minimal non-abelian $p$-groups).

Therefore the order of $G$ is at most $p^3 \cdot p^{i-1}=p^{i+2}$ and so the nilpotency class of $G$ is at most $i+1$ as required.
\end{proof}
Notice that this means an $\mathcal{M}_i$ $p$-group of exponent $p$ is a potent $p$-group.

\begin{corollary}
\label{corollary i leq p-3 an M_i group is potent}
 Let $p>2$. If $i \leq p-3$ and $G$ is an $\mathcal{M}_{i}$ group, then $G$ is a potent $p$-group.
 \end{corollary}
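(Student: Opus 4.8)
The plan is to reduce the general statement to the exponent-$p$ case already settled in Proposition~\ref{prop G exponent p mi group class at ost i+1}. The key idea is that killing all $p$-th powers, i.e.\ passing to $\bar{G} = G/\mho_{1}(G)$, simultaneously produces a group of exponent $p$ and links the lower central series of $G$ to $\mho_{1}(G)$: since $\mho_{n}$ is verbal we have $\gamma_{k}(\bar{G}) = \gamma_{k}(G)\mho_{1}(G)/\mho_{1}(G)$, so a bound on the nilpotency class of $\bar{G}$ translates directly into an inclusion $\gamma_{k}(G) \leq \mho_{1}(G)$, which is exactly the shape of the potency condition.

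First I would check that $\bar{G}$ is again an $\mathcal{M}_{i}$ group. This runs just as for the $\mathcal{M}_{1}$ case noted earlier: subgroups of index $p^{i}$ in $\bar{G}$ are precisely the images of subgroups $H \leq G$ of index $p^{i}$ containing $\mho_{1}(G)$; each such $H$ is powerful because $G$ is $\mathcal{M}_{i}$, and its image $H/\mho_{1}(G)$ is powerful because powerfulness is inherited by quotients. Hence $\bar{G}$ is an $\mathcal{M}_{i}$ group, and it clearly has exponent $p$, since $\bar{g}^{\,p} = \overline{g^{p}} = 1$ for every $g \in G$.

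Now Proposition~\ref{prop G exponent p mi group class at ost i+1} applies to $\bar{G}$ and yields that its nilpotency class is at most $i+1$, that is, $\gamma_{i+2}(\bar{G}) = 1$. Translating back through the displayed identity for $\gamma_{k}(\bar{G})$, this says $\gamma_{i+2}(G) \leq \mho_{1}(G)$. Finally I would invoke the hypothesis $i \leq p-3$, which gives $i+2 \leq p-1$, whence $\gamma_{p-1}(G) \leq \gamma_{i+2}(G) \leq \mho_{1}(G)$. This is precisely the statement that $G$ is potent.

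I do not expect a serious obstacle: the whole argument is a short deduction from the preceding proposition once one has the idea to quotient by $\mho_{1}(G)$. The only points needing (routine) care are verifying that the $\mathcal{M}_{i}$ property survives the quotient and checking that the numerology $i \leq p-3 \Rightarrow i+2 \leq p-1$ lines up correctly with the defining inequality $\gamma_{p-1}(G) \leq \mho_{1}(G)$ for potency.
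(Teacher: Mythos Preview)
Your argument is correct and follows essentially the same route as the paper: quotient by $\mho_{1}(G)$ to obtain an $\mathcal{M}_{i}$ group of exponent $p$, invoke Proposition~\ref{prop G exponent p mi group class at ost i+1}, and translate the class bound back to $\gamma_{p-1}(G)\leq\mho_{1}(G)$. If anything your version is slightly more direct, since the paper phrases the reduction as an induction on $|G|$, whereas you observe immediately that $\bar G$ has exponent $p$ and apply the proposition in one step.
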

 \begin{proof}
 We prove this by induction on the order of $G$. The result is clearly true for groups of order $p$. Now consider some $\mathcal{M}_i$ group $G$ and suppose that the claim holds for all groups of smaller order. 
 If $G$ has exponent $p$, the result follows from Proposition \ref{prop G exponent p mi group class at ost i+1}. Now suppose that the exponent of $G$ is greater than $p$. Then $\bar{G}=G/\mho_{1}(G)$ is an $\mathcal{M}_i$ group of order strictly less than $G$ and so by induction $\gamma_{p-1}(\bar{G}) \leq \mho_{1}(\bar{G})$, and it then follows that $\gamma_{p-1}(G) \leq \mho_{1}(G)$ as required.
 \end{proof}

This establishes Theorem \ref{intro theorem i <= p-3 then potent} from the introduction. We remark that when $p=2$, the definition of potent and powerful coincide. We can readily find examples of $2$-groups for which all maximal subgroups are powerful (even abelian) and yet the group itself is not powerful - for example the dihedral and quaternion groups of order $8$.

\subsection{$\mathcal{M}_i$ groups and minimal number of generators.}
\label{subsection mi groups and rank}
In this section we ask ourselves, for a given $\mathcal{M}_i$ group $G$, is there some condition on the number of generators of $G$ that will ensure desirable properties for our group $G$. 

\begin{proposition}
\label{proposition mi d(G) >= i+2 then potent}
Let $G$ be a finite $p$-group. If $G$ is a $\mathcal{M}_i$ group with $d(G) \geq i+2$, then $G$ is powerful.
\end{proposition}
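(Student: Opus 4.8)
The plan is to verify the defining inclusion elementwise: for odd $p$ we must show $[G,G] \le \mho_1(G)$, and for $p=2$ that $[G,G] \le \mho_2(G)$. Since $[G,G]$ is generated by the commutators $[x,y]$ with $x,y \in G$, it suffices to place each such commutator in the appropriate Agemo subgroup. The key reduction is the following: if for a given pair $x,y$ we can locate a subgroup $H$ of index $p^i$ with $x,y \in H$, then $H$ is powerful by hypothesis, so $[x,y] \in [H,H] \le \mho_1(H) \le \mho_1(G)$ (respectively $\le \mho_2(H) \le \mho_2(G)$), and that pair is dealt with. Thus the whole proof comes down to showing that every pair of elements is contained in a common subgroup of index $p^i$.

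To produce such an $H$, I would pass to the Frattini quotient $V = G/\Phi(G)$, which is elementary abelian and hence an $\mathbb{F}_p$-vector space of dimension $d = d(G)$. Subgroups of $G$ containing $\Phi(G)$ correspond bijectively to subspaces of $V$, with a subgroup of index $p^i$ matching a subspace of codimension $i$. Writing $\bar{x},\bar{y}$ for the images of $x,y$ in $V$, the subspace $\langle \bar{x},\bar{y}\rangle$ has dimension at most $2$. Because $d \ge i+2$, the codimension-$i$ subspaces of $V$ have dimension $d-i \ge 2$, so I can extend $\langle \bar{x},\bar{y}\rangle$ to a subspace $W \le V$ of dimension exactly $d-i$. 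Its preimage $H$ in $G$ is then a subgroup of index $p^i$ which contains $\Phi(G)$, and since $\bar{x},\bar{y} \in W$ we have $x,y \in H$, as required.

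Combining the two steps shows every commutator $[x,y]$ lies in $\mho_1(G)$ (resp. $\mho_2(G)$), which is exactly the assertion that $G$ is powerful. The argument is uniform in $p$: the prime enters only through whether powerful means $[H,H] \le \mho_1(H)$ or $[H,H] \le \mho_2(H)$, and in either case $\mho_k(H) \le \mho_k(G)$ whenever $H \le G$. I do not anticipate a serious obstacle, but the point that needs care is the restriction to subgroups containing $\Phi(G)$: a general subgroup of index $p^i$ need not contain $\Phi(G)$ once $i \ge 2$, so it is important that the entire search can be carried out inside the $d$-dimensional Frattini quotient. It is then precisely the hypothesis $d(G) \ge i+2$ that makes a codimension-$i$ subspace roomy enough to contain any $2$-dimensional subspace, and hence to accommodate the images of any two chosen elements.
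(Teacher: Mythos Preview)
Your proof is correct and follows essentially the same approach as the paper: both arguments place a pair of elements inside a subgroup of index $p^i$ lying above $\Phi(G)$, then use that this subgroup is powerful. The only cosmetic difference is that the paper works with pairs drawn from a fixed minimal generating set (so $\langle a_i,a_j,\Phi(G)\rangle$ has index $p^{d(G)-2}\ge p^i$ and sits inside some index-$p^i$ subgroup), whereas you handle arbitrary pairs $x,y$ by extending $\langle \bar x,\bar y\rangle$ to a codimension-$i$ subspace of $G/\Phi(G)$; the underlying idea is identical.
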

\begin{proof}
Let $\{a_1, \dots, a_r \}$ be a minimal generating set for $G$. Then $H = \langle a_i, a_j, \Phi(G) \rangle$, for $i, j \in \{1, \dots, r\}$ and $i \neq j$, is a subgroup of index $p^{d(g)-2} \geq p^i$. In particular $H$ is contained in some powerful subgroup $M$ of index $p^i$ in $G$. If $p$ is an odd prime, then $[a_i, a_j] \in \mho_{1}(M) \leq \mho_{1}(G)$. It follows that $[G,G] \leq \mho_{1}(G)$ and that $G$ is powerful. If $p=2$ then $[a_i, a_j] \in \mho_{2}(M) \leq \mho_{2}(G)$ and again it follows $G$ is powerful.
\end{proof}
This is Theorem \ref{intro theorem rank then powerful} from the introduction. \par

In the next section we focus our attention on $\mathcal{M}_1$ groups and their power structure properties. Corollary \ref{corollary i leq p-3 an M_i group is potent} and Proposition \ref{proposition mi d(G) >= i+2 then potent} will allow us to reduce to studying $2$-generator $3$-groups.

\section{On power structure of $\mathcal{M}_1$ groups}
\label{section on power structure of m1 groups}
In this section we investigate the power structure properties of $\mathcal{M}_1$ groups. As all maximal subgroups are powerful, and hence have a regular power structure by Theorem \ref{theorem powerful p-groups regular power structure}, it seems reasonable to expect a $\mathcal{M}_1$ group $G$ to have good power structure properties. 

We recall the definition of regular power structure from the introduction.
 
\begin{definition*} 
A finite $p$-group $G$ has a \textit{regular power structure} if the following three conditions hold for all positive integers $i$:
\begin{align}
 \mho_{i}(G) &=\{g^{p^{i}} \mid g \in G \}. \tag{\ref{eqn pth power}} \\
      \Omega_{i}(G) &= \{g\in G \mid o(g) \leq p^{i} \} \tag{\ref{eqn omega conditon}}.  \\
 |G:\mho_{i}(G)| &= | \Omega_{i}(G)|.  \tag{\ref{eqn index condition}}
\end{align}
\end{definition*}
\vspace{2mm}

It is our goal in this section to show that all $\mathcal{M}_1$ groups satisfy  conditions (\ref{eqn pth power}) and (\ref{eqn omega conditon}). Remarkably we show that they also satisfy (\ref{eqn index condition}), except for one specific exception -- a $3$-group of maximal class and order $3^4$.  \par

To begin, we use the results from Section \ref{subsection mi groups and potency} to show that we can reduce the problem of whether $\mathcal{M}_1$ groups have a regular power structure, to the study of $2$-generator $3$-groups. \par

\textbf{Reduction to $p=3$}: By Corollary \ref{corollary i leq p-3 an M_i group is potent}, for primes $p\geq 5$ we have that an $\mathcal{M}_1$ group $G$ is a potent $p$-group, and by Theorem \ref{theorem potent p groups regular power structure},  such groups have a regular power structure. Thus we need only focus on $p=3$. \par
\textbf{Reduction to $2$-generator groups:} Furthermore by Proposition \ref{proposition mi d(G) >= i+2 then potent} if the number of generators of an $\mathcal{M}_1$ $3$-group were greater than $2$, then the group is powerful and also has a regular power structure by Theorem \ref{theorem powerful p-groups regular power structure}. Thus in what follows we only need to consider the case of $2$-generator $3$-groups. However we note that many of our arguments work for any odd prime $p$ and so may be of independent interest. We will not need to rely on this reduction until Section \ref{section index property p=3}.

\subsection{}
Our first goal is to show that condition \ref{eqn omega conditon} is satisfied. That is, we want to show that the product of two elements of order at most $p^i$, has order at most $p^i$. 

Note that for an $\mathcal{M}_1$ group the maximal subgroups are powerful, and so have a regular power structure and in particular satisfy condition \ref{eqn omega conditon}. This condition is inherited by subgroups and so any proper subgroup of $G$ must satisfy this condition. We use this fact freely in what follows to conclude that proper subgroups which are generated by elements of exponent $p$ must have exponent $p$.

\begin{proposition}
\label{proposition m1 group 2 gen both order p}
Let $G=\langle a, b \rangle $ be an $\mathcal{M}_1$ group, with $o(a)=p, o(b)=p$, then $\exp G = p$.
\end{proposition}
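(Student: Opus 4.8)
The plan is to argue by induction on $|G|$, so suppose the statement holds for all $\mathcal{M}_1$ groups of smaller order that are generated by two elements of order $p$, and let $G=\langle a,b\rangle$ with $o(a)=o(b)=p$. I first claim that every proper quotient of $G$ has exponent $p$. Indeed, if $1\neq N\trianglelefteq G$ then $G/N$ is again an $\mathcal{M}_1$ group, it is generated by the images of $a$ and $b$, and those images have order dividing $p$. If both images have order exactly $p$, then $\exp(G/N)=p$ by the induction hypothesis; if one of them is trivial then $G/N$ is cyclic of order at most $p$. Either way $\exp(G/N)\le p$, that is $\mho_{1}(G)\le N$. Hence $\mho_{1}(G)$ is contained in every nontrivial normal subgroup of $G$. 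If $\exp G=p$ we are done, so assume otherwise; then $\mho_{1}(G)\neq 1$, and since the minimal normal subgroups of a $p$-group are central of order $p$, this containment forces a unique minimal normal subgroup $Z$, with $\mho_{1}(G)=Z\le Z(G)$ and $|Z|=p$.

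At this point $G/Z$ is a two-generator $\mathcal{M}_1$ group of exponent $p$, since $\mho_{1}(G)=Z$. By Proposition \ref{prop G exponent p mi group class at ost i+1} with $i=1$, the group $G/Z$ has nilpotency class at most $2$, so $\gamma_{3}(G)\le Z$. As $Z$ is central this gives $\gamma_{4}(G)=[\gamma_{3}(G),G]\le[Z,G]=1$, so $G$ has class at most $3$. For $p\ge 5$ we then have class at most $3<p$, hence $G$ is regular; a regular $p$-group generated by elements of order $p$ has exponent $p$, because its elements of order dividing $p$ form a subgroup equal to $\Omega_{1}(G)$. This contradicts $\exp G>p$ and settles every prime except $p=3$.

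For $p=3$ the class bound only yields class at most $3=p$, so regularity is no longer automatic, and this is where the genuine difficulty lies. If $G$ had class at most $2$ it would be regular and we would conclude as above, so we may assume the class is exactly $3$; in particular $|G|\ge 3^{4}$. On the other hand $G/Z$ is a two-generator group of exponent $3$, so by the classical fact that the largest such group has order $27$ we get $|G/Z|\le 27$ and hence $|G|\le 81$. Combining these, the only surviving possibility is $|G|=81$ with $G$ of maximal class, and then $G/Z$ is the (unique) two-generator group of exponent $3$ and order $27$.

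The remaining task is therefore a bounded one: to rule out that a maximal-class $3$-group of order $81$ which is $\mathcal{M}_1$ and has an element of order $9$ can be generated by two elements of order $3$, equivalently to show that in such a group every element of order $3$ lies in a proper subgroup. I expect this final verification to be the main obstacle, since it is precisely the regime near the exceptional group of order $81$ that the rest of the paper must treat with care. It is a small, explicit case analysis that can be discharged either by hand or, in keeping with the computational approach used elsewhere, by a short check in GAP over the finitely many candidate groups of order $81$.
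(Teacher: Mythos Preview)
Your argument is correct, and the residual finite check at the end does succeed: the only $\mathcal{M}_1$ group of order $81$ and maximal class is the group $J$ of Example~\ref{example smallgroup 81 10}, and there $\Omega_{1}(J)=\langle a_3,a_4\rangle=\Phi(J)$, so $J$ cannot be generated by elements of order $3$. The paper, however, takes a different and uniform route that avoids any split on the prime or computer verification. After the same inductive reduction to $\exp G\le p^{2}$, it argues directly that $[G,G]$ and $\mho_{1}(G)$, and hence $\Phi(G)$, have exponent $p$ (each lies inside a powerful maximal subgroup, where condition~(\ref{eqn omega conditon}) already holds, so products of elements of order $p$ again have order $p$). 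Then the two maximal subgroups $M_{1}=\langle a,\Phi(G)\rangle$ and $M_{2}=\langle b,\Phi(G)\rangle$ are powerful and generated by elements of order $p$, hence of exponent $p$, hence abelian; Fitting's lemma gives $G=M_{1}M_{2}$ of class at most $2$, so $G$ is regular and of exponent $p$ for every odd $p$ at once. Your route trades this Fitting-plus-``powerful of exponent $p$ is abelian'' step for an appeal to Proposition~\ref{prop G exponent p mi group class at ost i+1} together with a small machine check; it is pleasantly elementary in its reductions, but leaves the $p=3$ endgame to computation, whereas the paper keeps this particular proposition purely theoretical and reserves GAP for the genuinely exceptional behaviour of condition~(\ref{eqn index condition}) later on.
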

\begin{proof}
First note that we can assume that $Z(G) \leq \Phi(G)$, or else the group is abelian. Next, we observe that $a, a^b \in a^G \leq M$, for some maximal subgroup $M$. As $M$ is powerful, it satisfies condition \ref{eqn omega conditon}. As $[a,b]=a \cdot a^b$, and both these factors have order $p$, it follows that $o([a,b]) \leq p$. All conjugates of $[a,b]$ in $G$ must have order at most $p$, and are contained in $M$. Then the products formed by these conjugates will have order at most $p$, and so it follows that $\exp ([G,G]) \leq p$. 

Note that for any minimal normal subgroup $Z \leq \Phi(G)$, we have that $G/Z$ satisfies the hypothesis, and so $G/Z$  must be of exponent at most $p$. Then as $Z$ was a minimal normal subgroup, so has order $p$, we may assume that the exponent of $G$ is at most $p^2$. This then means that $o(g^p)\leq p$ for all $g \in G$, and since all elements of this form are contained in a powerful, maximal subgroup $M$, it follows that the elements they generate  must have order at most $p$ and so $\exp (\mho_{1}(G)) = p$. 

Thus the product $\Phi(G)=[G,G] \mho_{1}(G)$ must also have exponent $p$, for the same reason (as $[G,G]$ and $\mho_{1}(G)$ are both contained in a powerful maximal subgroup, and so condition \ref{eqn omega conditon} holds). Then the maximal subgroups $M_{1}= \langle a, \Phi(G) \rangle$ and $M_2 = \langle b, \Phi(G) \rangle $ are both of exponent $p$, and as they are powerful by hypothesis, they must be abelian since $[M_i, M_i] \leq \mho_{1}(M_i)=1$ for $i \in \{1,2 \}$. 

Then by Fitting's Lemma \ref{lemma fitting}, the nilpotency class of $G=M_1 M_2$ is at most $2$. In particular the group $G= \langle a, b \rangle $ has exponent $p$  (as $p$ is odd, the group must be regular).
\end{proof}

\begin{corollary}
\label{cor m1 grp omega1 has exp p}
Let $G$ be a $\mathcal{M}_1$ group, then $\exp \Omega_{1}(G) \leq p$.
\end{corollary}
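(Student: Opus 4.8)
The plan is to reduce the statement to the pairwise fact that the product of any two elements of order dividing $p$ again has order dividing $p$, and then to bootstrap this up to arbitrary products by induction on the number of factors. Since $\Omega_1(G)$ is by definition generated by the elements $g \in G$ with $g^p = 1$, every element of $\Omega_1(G)$ is a finite product $g_1 \cdots g_n$ of such elements; once I know each partial product stays of order dividing $p$, I get $\exp \Omega_1(G) \leq p$ immediately.

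First I would establish the key claim: if $a, b \in G$ satisfy $a^p = b^p = 1$, then $o(ab) \leq p$. Here I split on the subgroup $H = \langle a, b \rangle$. If $H$ is a proper subgroup of $G$, then $H$ lies inside some powerful maximal subgroup, and since condition \ref{eqn omega conditon} is inherited by subgroups, $H$ satisfies it; as $H$ is generated by elements of order at most $p$, this forces $\exp H \leq p$, whence $o(ab) \leq p$. If instead $H = G$, then $G$ is a $2$-generator $\mathcal{M}_1$ group whose two generators have order dividing $p$ (discarding the trivial case where one generator is the identity, in which case $G$ is cyclic of order at most $p$), and Proposition \ref{proposition m1 group 2 gen both order p} gives $\exp G = p$ directly; in particular $o(ab) \leq p$. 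So the claim holds in every case.

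Finally I would run the induction. Writing an arbitrary element of $\Omega_1(G)$ as $g_1 \cdots g_n$ with each $g_i^p = 1$, the induction hypothesis makes $y = g_1 \cdots g_{n-1}$ an element of order dividing $p$, and applying the pairwise claim to $y$ and $g_n$ shows $o(g_1 \cdots g_n) \leq p$. Hence every element of $\Omega_1(G)$ has order at most $p$, giving $\exp \Omega_1(G) \leq p$. The one delicate point -- and the place where the whole argument could break down -- is the case $H = G$ in the pairwise claim: there the ``proper subgroup inherits condition \ref{eqn omega conditon}'' trick is unavailable, and it is precisely Proposition \ref{proposition m1 group 2 gen both order p} that rescues us by handling a group generated by two order-$p$ elements on its own terms. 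Structuring the pairwise claim so that this whole-group case simply returns $o(ab) \leq p$ (rather than needing to be treated separately in the induction) is what keeps the argument clean.
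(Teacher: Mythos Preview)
Your proof is correct and follows essentially the same approach as the paper's. The paper argues by contradiction, observing that a counterexample would yield two elements $a,b$ of order $p$ with $o(ab)>p$, forcing $G=\langle a,b\rangle$ and then invoking Proposition~\ref{proposition m1 group 2 gen both order p}; your version makes explicit the induction on the number of factors that the paper's ``two elements'' reduction leaves implicit, but the structure and the key use of Proposition~\ref{proposition m1 group 2 gen both order p} are identical.
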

\begin{proof}
Notice that if $G$ were a counterexample to this corollary, then $G$ would need to contain two elements $a, b$ of order $p$ whose product had order greater than $p$. Then the elements  $a$ and $b$ cannot be contained in the same maximal subgroup so we must have $G= \langle a, b \rangle$, however now by Proposition \ref{proposition m1 group 2 gen both order p} it follows that the exponent of $G$ is $p$. Hence there can be no counterexample.
\end{proof}

We can now prove the more general result, showing that $\mathcal{M}_1$ groups satisfy condition \ref{eqn omega conditon}, that $\exp (\Omega_{i}(G)) \leq p^i$ for any positive integer $i$.

\begin{theorem}
\label{theorem omega condition holds}
Let $G$ be an $\mathcal{M}_1$ group, then $\exp \Omega_{i}(G) \leq p^i$.
\end{theorem}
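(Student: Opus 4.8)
The plan is to induct on $i$, taking Corollary \ref{cor m1 grp omega1 has exp p} as the base case $i=1$. For the inductive step I assume that $\exp \Omega_{i-1}(H) \leq p^{i-1}$ holds for every $\mathcal{M}_1$ group $H$, and I aim to deduce $\exp \Omega_i(G) \leq p^i$ for an arbitrary $\mathcal{M}_1$ group $G$. The induction is run uniformly over the whole class of $\mathcal{M}_1$ groups rather than by order, since the quotients I use below are themselves $\mathcal{M}_1$ groups.

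The key object is the normal subgroup $N = \Omega_1(G)$, which by Corollary \ref{cor m1 grp omega1 has exp p} has exponent at most $p$, and the quotient $G/N$, which is again an $\mathcal{M}_1$ group as the class is closed under quotients. First I would observe that passing to $G/N$ lowers the relevant orders: if $g \in G$ satisfies $o(g) \leq p^i$, then $o\left(g^{p^{i-1}}\right) \leq p$, so $g^{p^{i-1}} \in \Omega_{\{1\}}(G) \subseteq N$ and hence the image $\bar g$ has order at most $p^{i-1}$ in $G/N$. Since $\Omega_i(G)$ is generated by elements of order at most $p^i$ and contains $N = \Omega_1(G)$, this shows that $\Omega_i(G)/N \leq \Omega_{i-1}(G/N)$.

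Now I would apply the inductive hypothesis to the $\mathcal{M}_1$ group $G/N$ to get $\exp \Omega_{i-1}(G/N) \leq p^{i-1}$, whence $\exp\left(\Omega_i(G)/N\right) \leq p^{i-1}$. Thus for any $x \in \Omega_i(G)$ we have $x^{p^{i-1}} \in N$, and since $N$ has exponent at most $p$ this gives $x^{p^i} = \left(x^{p^{i-1}}\right)^p = 1$. Therefore $\exp \Omega_i(G) \leq p^i$, completing the induction.

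I do not expect a serious obstacle here: the argument rests on the two facts already in hand -- that $\Omega_1$ of an $\mathcal{M}_1$ group has exponent $p$, and that the $\mathcal{M}_1$ property passes to quotients -- together with the elementary containment $\Omega_i(G)/\Omega_1(G) \leq \Omega_{i-1}(G/\Omega_1(G))$. The only point needing a moment's care is verifying this containment and the final order count, namely that reducing modulo a subgroup of exponent $p$ drops the order of each generator by exactly one power of $p$ and that the bounds $p$ and $p^{i-1}$ then combine to $p^i$. Notably this approach avoids any reduction to two-generator groups and works verbatim for every odd prime $p$.
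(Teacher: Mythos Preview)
Your proof is correct and follows essentially the same inductive scheme as the paper: reduce modulo a normal subgroup of exponent $p$, push the generators of $\Omega_i(G)$ into $\Omega_{i-1}$ of the quotient, apply the inductive hypothesis there, and then lift back. The only difference is cosmetic---you take $N=\Omega_1(G)$ whereas the paper takes $N=\Omega_1(\Phi(G))$ (noting that $g^{p^{j-1}}$ is a $p$th power and hence lies in $\Phi(G)$)---but either choice works and the arguments are otherwise identical.
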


\begin{proof}
The case $i=1$ is Corollary \ref{cor m1 grp omega1 has exp p}, thus we may assume that $i>1$. 

Let $N = \Omega_{1}(\Phi(G))$. The group $\bar{G}=G/N$ is an $\mathcal{M}_1$ group. Then $\Omega_{i-1}(\bar{G})=H/N$ for some subgroup $H$ of $G$ with $N \leq H$. By induction we know that $\exp \Omega_{i-1}(\bar{G}) \leq p^{i-1}$. 
Observe that if $g \in G$ has order $p^j \leq p^i$ with $j>1$, then $\bar{g} \in \Omega_{i-1}(\bar{G})$. This is because $g^{p^{j-1}}$ is a $p$th power, and is of order $p$, and so $g^{p^{j-1}} \in N$. 

Hence $\Omega_{i}(G)\leq H$, and for any $h \in H$, we have that $h^{p^{i-1}} \in N$, and the exponent of $N$ is $p$ since $N \leq \Omega_{1}(M)$ for a maximal (and hence powerful) subgroup $M$. Thus $ \exp \Omega_{i}(G) \leq \exp H \leq  p^i$.
\end{proof}

\subsection{} In this section we move to showing that condition \ref{eqn pth power} is satisfied. That is, we wish to show that for a $\mathcal{M}_1$ group, the product of two $p^i$th powers is equal to a $p^i$th power. The method of proof in this part is interesting, as we apply a result of Mann from \cite{mannpowstructure1}. 

We recall some definitions from \cite{mannpowstructure1}:
\begin{definition}
A $p$-group $G$ is a $P_1$ group if $G$, as well as \textit{all sections of $G$}, satisfy condition \ref{eqn pth power}. A $p$-group $G$ is a $P_2$ group if $G$, as well as \textit{all sections of $G$}, satisfy condition \ref{eqn omega conditon}.
\end{definition}

For example if $G$ is a $p$-group (for $p$ odd) of nilpotency class $2$, then $G$ is a regular $p$-group and so satisfies all three power structure conditions. Every section of $G$ will have nilpotency class at most $2$, and so will also satisfy all three power structure conditions. Thus in particular $G$ is a $P_2$ group. We use this fact in the next proof. 

As in the previous section, we begin with a reduced $2$-generator case. 
\begin{proposition}
\label{prop 2 gen m1 group exp p2 is p_2 group}
Let $G$ be an $\mathcal{M}_1$ group with $d(G) \leq 2$ and $\exp G \leq p^2$. Then $G$ is a $P_2$ group.
\end{proposition}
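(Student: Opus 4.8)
The plan is to verify condition \ref{eqn omega conditon} in \emph{every} section of $G$ by splitting the sections into two families that can be treated separately: those arising from proper subgroups of $G$, and the full quotients of $G$. The key structural input is that, under the exponent hypothesis, the maximal subgroups of $G$ are not merely powerful but have nilpotency class at most $2$, which (as noted in the example preceding this proposition) already forces them to be $P_2$ groups.

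First I would show that every maximal subgroup $M$ of $G$ has class at most $2$. Since $G$ is $\mathcal{M}_1$ and $M$ has index $p$, the subgroup $M$ is powerful, and since $\exp M \leq \exp G \leq p^2$ we have $\mho_2(M) = 1$. Using the standard property of powerful $p$-groups recalled in Section \ref{section preliminaries} that $\gamma_k(M) \leq \mho_{k-1}(M)$, it follows that $\gamma_3(M) \leq \mho_2(M) = 1$, so $M$ has class at most $2$. As $p$ is odd, a group of class at most $2$ has class $<p$ and is therefore regular, and every subgroup and quotient of $M$ again has class at most $2$; hence every section of $M$ is regular and satisfies all three power structure conditions. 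In particular each maximal subgroup $M$ of $G$ is a $P_2$ group.

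Finally I would classify the sections of $G$. Any section has the form $H/N$ with $N \trianglelefteq H \leq G$. If $H$ is a proper subgroup, then $H$ is contained in some maximal subgroup $M$, so $H/N$ is a section of $M$ and hence satisfies condition \ref{eqn omega conditon} by the previous step. If instead $H = G$, then $H/N = G/N$ is a quotient of $G$, hence itself an $\mathcal{M}_1$ group, so by Theorem \ref{theorem omega condition holds} we have $\exp \Omega_i(G/N) \leq p^i$ for every $i$; since the set of elements of order at most $p^i$ always generates $\Omega_i$ and is contained in it, this exponent bound is exactly condition \ref{eqn omega conditon} for $G/N$. As these two cases exhaust all sections, $G$ is a $P_2$ group. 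The only place where genuine work is required is the class bound for the maximal subgroups; everything afterwards is bookkeeping about where a given section sits. The point to emphasise is that $\mathcal{M}_1$ is not inherited by arbitrary subgroups, so one cannot simply induct on subgroups. The argument instead exploits the fact that every proper subgroup is housed inside a powerful (indeed class-$2$) maximal subgroup, whereas the whole-group quotients are handled separately because both the $\mathcal{M}_1$ property and Theorem \ref{theorem omega condition holds} survive passage to quotients.
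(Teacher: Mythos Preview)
Your proof is correct and follows essentially the same decomposition as the paper: proper sections are handled because each maximal subgroup is powerful of exponent at most $p^2$ and hence has class at most $2$ (so is $P_2$), while full quotients $G/N$ are handled because $\mathcal{M}_1$ passes to quotients and Theorem~\ref{theorem omega condition holds} then gives condition~\ref{eqn omega conditon} directly. The only difference is that the paper frames the quotient case as an induction on $|G|$, passing through $G/\langle z\rangle$ for $z\in N\cap Z(G)$ to conclude that each proper quotient is itself a $P_2$ group; your argument bypasses this by observing that one only needs condition~\ref{eqn omega conditon} for $G/N$, not the full $P_2$ property, and Theorem~\ref{theorem omega condition holds} already delivers that for every $\mathcal{M}_1$ quotient without any induction.
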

\begin{proof}
The proof is by induction on the order of the group. For orders $p$ and $p^2$ the groups are abelian and so the claim holds.

For any subgroup $Z \leq Z(G)$ we have that $G/Z$ satisfies the hypothesis. 

For any normal subgroup $N$ of $G$, we have a nontrivial $z \in N \cap Z(G)$. Then by the isomorphism theorems we know that $G/N \cong \frac{G/\langle z \rangle} {N/ \langle z \rangle}$, and notice that $G/ \langle z \rangle$ is a $\mathcal{P}_2$ group by induction, hence $G/N$ is a $P_2$ group. Hence all proper quotients of $G$ are $P_2$ groups.  We also know that $G$ itself satisfies condition (\ref{eqn omega conditon}) by Theorem \ref{theorem omega condition holds}. Thus all that remains is to prove that proper subgroups of $G$ are $P_2$. 

For any maximal subgroup $M$ of $G$, we have that $[M,M,M] \leq [\mho_{1}(M),M] \leq \mho_{2}(M)=1$, since $M$ is powerful. Then in particular all proper subgroups have class at most $2$. For $p$ odd, groups of class $2$ are $P_2$ groups.

Thus we have established that $G$ is a $P_2$ group.
\end{proof}

We now quote a result from \cite{mannpowstructure1}.
\begin{theorem}[{\cite[Corollary 4]{mannpowstructure1}}] \label{theorem p2 group is p1 group} A $P_2$ group is a $P_1$ group.

\end{theorem}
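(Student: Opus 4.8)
The plan is to exploit the two features built into the definition of a $P_2$ group: it is closed under passing to sections, and the target property $P_1$ is itself a statement about all sections. Since every section of a $P_2$ group is again $P_2$, it suffices to prove the single implication that an arbitrary $P_2$ group $G$ satisfies condition (\ref{eqn pth power}), i.e. that $\mho_{\{i\}}(G)=\mho_{i}(G)$ for every $i$; applying this to each section of $G$ then shows $G$ is $P_1$. I would prove this by induction on $|G|$, taking $G$ to be a counterexample of least order. Then every proper section of $G$ satisfies (\ref{eqn pth power}), while $G$ itself fails it for some fixed $i$.

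First I would reduce modulo a central subgroup. Choose $N\le Z(G)$ with $|N|=p$. Because $G/N$ is a proper section it satisfies (\ref{eqn pth power}), and since $\mho$ is verbal we have $\mho_{i}(G/N)=\mho_{i}(G)N/N$ and $\mho_{\{i\}}(G/N)=\mho_{\{i\}}(G)N/N$. Equating these in $G/N$ yields the sandwich $\mho_{\{i\}}(G)\subseteq \mho_{i}(G)\subseteq \mho_{\{i\}}(G)N$. As $G$ fails (\ref{eqn pth power}) at this $i$ the inclusions are as tight as possible, so $\mho_{i}(G)=\mho_{\{i\}}(G)N$ and $|\mho_{i}(G)|=p\,|\mho_{\{i\}}(G)|$ (using that $|\mho_{\{i\}}(G)|$ is a power of $p$); moreover this must hold for \emph{every} central subgroup of order $p$. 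In words, the set of $p^{i}$-th powers is exactly one $N$-layer short of being the subgroup it generates, no matter which central line $N$ we pick.

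The crux is then to contradict this using the $P_2$ hypothesis, which so far has been used only through its section-closure. Here I would count $p^{i}$-th powers through the fibres of the power map $\phi\colon x\mapsto x^{p^{i}}$: its image is $\mho_{\{i\}}(G)$, and its fibre over the identity is the set of elements of order dividing $p^{i}$, of size $|\Omega_{\{i\}}(G)|$. The $P_2$ condition forces $\Omega_{\{i\}}$ to be a subgroup in $G$ and in every section, which rigidifies the fibre sizes; comparing the number of $p^{i}$-th powers in $G$ with the number in $G/N$ — where, by tightness, precisely the elements of $\mho_{\{i\}}(G)N\setminus\mho_{\{i\}}(G)$ have been newly created — should then over-count the small-order elements relative to what a genuine subgroup $\Omega_{\{i\}}$ can support. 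This numerical clash is the desired contradiction, completing the induction.

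I expect this fibre-counting duality to be the main obstacle: translating the failure of closure for \emph{powers} into a violation of the subgroup structure of the \emph{small-order} elements is exactly the delicate passage between conditions (\ref{eqn pth power}) and (\ref{eqn omega conditon}), and it is here that one must import Mann's careful analysis of the number of solutions of $x^{p^{i}}=a$. The prime $p=2$ is a genuine warning for this step — the pair $D_{8}$, $Q_{8}$ shows that the naive inequalities between $|\Omega_{\{i\}}(G)|$ and $|\mho_{\{i\}}(G)|$ fail in \emph{both} directions — so the argument cannot rest on any single crude count and must instead lean on the full strength of the hypothesis that all sections satisfy condition (\ref{eqn omega conditon}).
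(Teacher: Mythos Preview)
The paper does not supply a proof of this statement at all: Theorem~\ref{theorem p2 group is p1 group} is quoted verbatim as \cite[Corollary 4]{mannpowstructure1} and used as a black box in the proof of Corollary~\ref{cor 2 gen m1 group exp p2 has set pth powers}. So there is no ``paper's own proof'' to compare against; the only question is whether your sketch stands on its own.

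Your reduction is sound up to the point where you pass to $G/N$: the section-closure of $P_2$ correctly reduces the problem to showing a single $P_2$ group satisfies condition~(\ref{eqn pth power}), and the verbal nature of $\mho_i$ together with the inductive hypothesis on $G/N$ does give $\mho_i(G)\subseteq \mho_{\{i\}}(G)N$. However, the assertion that $|\mho_i(G)|=p\,|\mho_{\{i\}}(G)|$ ``using that $|\mho_{\{i\}}(G)|$ is a power of $p$'' is not justified: $\mho_{\{i\}}(G)$ is only a \emph{set}, and showing its cardinality is a $p$-power is itself a nontrivial consequence of Mann's analysis (equivalently, that all fibres of $x\mapsto x^{p^i}$ have the same size in a $P_2$ group). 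Without this, the inequality $|\mho_{\{i\}}(G)|\le|\mho_i(G)|\le p\,|\mho_{\{i\}}(G)|$ does not force the extremes.

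More seriously, your final paragraph is not a proof but a programme: you say the ``numerical clash'' between fibre counts in $G$ and $G/N$ \emph{should} contradict the subgroup structure of $\Omega_{\{i\}}$, and then explicitly concede that ``one must import Mann's careful analysis of the number of solutions of $x^{p^i}=a$''. That is precisely the content of Mann's theorem; deferring to it here means the argument is circular as written. If you want a self-contained proof you must actually carry out that fibre comparison---showing, for a $P_2$ group, that each equation $x^{p^i}=a$ with $a\in\mho_{\{i\}}(G)$ has exactly $|\Omega_i(G)|$ solutions---and then derive condition~(\ref{eqn pth power}) from the resulting equality $|G|=|\Omega_i(G)|\cdot|\mho_{\{i\}}(G)|$. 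Until that step is supplied, the proposal remains an outline rather than a proof.
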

Combining Proposition \ref{prop 2 gen m1 group exp p2 is p_2 group} and Theorem \ref{theorem p2 group is p1 group}  we obtain the following corollary.

\begin{corollary}
\label{cor  2 gen m1 group exp p2 has set pth powers}
Let $G$ be an $\mathcal{M}_1$ group with $d(G) \leq 2$ and $\exp G \leq p^2$. Then $\mho_{1}(G) = \{ g^p \mid g \in G \}.$
\end{corollary}
The results so far can now be used to prove that for an $\mathcal{M}_1$ group, the product of $p$th powers is equal to a $p$th power.
\begin{proposition}
\label{prop m1 group pth powers equal set pth powers}
Let $G$ be an $\mathcal{M}_1$ group, then $\mho_{1}(G)= \{ g^p \mid g \in G \}$.
\end{proposition}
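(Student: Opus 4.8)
The reverse inclusion $\{g^p \mid g \in G\} \subseteq \mho_1(G)$ is immediate, so the plan is to prove that every element of $\mho_1(G)$ is a single $p$th power. Since $\mho_1(G)$ is generated by $p$th powers and the inverse of a $p$th power is again a $p$th power (as $(g^p)^{-1} = (g^{-1})^p$), every $x \in \mho_1(G)$ is a product $g_1^p \cdots g_k^p$. A short induction on $k$ reduces everything to the key case $k=2$: it suffices to show that a product of two $p$th powers $a^p b^p$ is again a $p$th power, since then $g_2^p \cdots g_k^p$ can first be collapsed to a single $p$th power and combined with $g_1^p$. I would take $p$ odd throughout, as we rely on powerful $p$-groups having a regular power structure (Theorem \ref{theorem powerful p-groups regular power structure}).

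For the two-factor statement I would split on whether $a$ and $b$ generate $G$. If $\langle a,b\rangle \neq G$, then $\langle a,b\rangle$ lies in some maximal subgroup $M$, which is powerful and hence has a regular power structure; thus $a^p b^p \in \mho_1(M) = \{m^p \mid m \in M\}$ is a $p$th power of an element of $M \leq G$. This disposes of every case except when $G = \langle a,b\rangle$ is $2$-generated (the cyclic case being trivial). In that case I would reduce the exponent by passing to $\bar G = G/\mho_2(G)$: this quotient is again an $\mathcal{M}_1$ group, is generated by at most two elements, and has $\exp \bar G \leq p^2$ since $g^{p^2}\in \mho_2(G)$. Corollary \ref{cor  2 gen m1 group exp p2 has set pth powers} then applies to $\bar G$, so the image of $a^p b^p$ is a single $p$th power $\bar c^{\,p}$; lifting gives $a^p b^p = c^p w$ with $w \in \mho_2(G)$.

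It remains to absorb the correction term $w$, and this is the step I expect to be the main obstacle. The point is that, unlike the generating elements $a,b$, the element $w$ has its ``bases'' lying in the Frattini subgroup: since $\mho_2(G) = \langle (g^p)^p \mid g\in G\rangle \leq \mho_1(\mho_1(G))$, I can write $w = u_1^p \cdots u_m^p$ with each $u_j \in \mho_1(G) \leq \Phi(G)$. I would then combine the factors of $c^p u_1^p \cdots u_m^p$ from left to right: because $d(G) = 2$, for any single element $g \in G$ the subgroup $\langle g, \Phi(G)\rangle$ is proper, hence contained in a powerful maximal subgroup with a regular power structure. Thus $c^p u_1^p$ lies in $\mho_1(\langle c,\Phi(G)\rangle)$ and equals some $c_1^p$; repeating with $c_1^p u_2^p$, and so on, collapses the whole product to a single $p$th power $c_m^p = a^p b^p$. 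The delicate points to get right are that the $\mho_2$-discrepancy really does have its bases inside $\Phi(G)$, so that the merging applies, and that the merging terminates in finitely many steps without reintroducing a generator-level base; both hinge on $G$ being $2$-generated, which is exactly why the generating elements $a,b$ had to be handled separately via Corollary \ref{cor  2 gen m1 group exp p2 has set pth powers} modulo $\mho_2(G)$.
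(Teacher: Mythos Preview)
Your argument is correct and follows essentially the same strategy as the paper: reduce to the $2$-generator case, invoke Corollary~\ref{cor  2 gen m1 group exp p2 has set pth powers} for exponent at most $p^2$, and absorb the resulting correction term using that a single element together with $\Phi(G)$ lies in a powerful maximal subgroup. The only difference is in how the exponent reduction is carried out. The paper proceeds by induction on $|G|$: when $\exp G > p^2$ it picks a minimal normal subgroup $Z \leq \mho_1(\mho_1(G))$, so that by induction $a^p b^p = c^p z$ with $z \in Z$; then $z \in \mho_1(M)$ for the maximal $M$ containing $c$, whence $z = m^p$ and a single merge $c^p m^p = x^p$ finishes. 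You instead pass in one step to $G/\mho_2(G)$ and then handle the (possibly longer) correction $w \in \mho_2(G) \leq \mho_1(\mho_1(G))$ by decomposing it as $u_1^p\cdots u_m^p$ with each $u_j \in \Phi(G)$ and merging from the left. Your route avoids the induction on order at the cost of finitely many merges rather than one; the paper's route is a touch slicker because the minimal normal subgroup already delivers the correction as a single $p$th power inside $\mho_1(M)$.
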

\begin{proof}
As usual, we may assume that $d(G)=2$, by Proposition \ref{proposition mi d(G) >= i+2 then potent}. Next we show that we may assume that the exponent of the group is at most $p^2$. 

If $\exp G > p^2$, then $\mho_1( \mho_1 (G)) \neq 1$. Then $\mho_1 (\mho_1 (G))$ contains some minimal normal subgroup $Z$. Then by induction, it follows that for any $a, b \in G$ we have that $a^p b^p = c^p z$ for some $c \in G$ and $z \in Z$. The element $c$ is contained in some maximal subgroup $M$. Notice that $z \in \mho_1( \mho_1 (G)) \leq \mho_{1}(M)$, and so $z=m^p$ for some $m \in M$  (here we use that $M$ is powerful and so satisfies condition \ref{eqn pth power}).

But now, $a^p b^p = c^p z = c^p m^p = x^p$ for some $x \in M$, where we have used again that $c, m \in M$ and $M$ is powerful. Thus it follows that $\mho_{1}(G) = \{ g^p \mid g \in G \}$. 

Hence we may assume that the exponent of $G$ is at most $p^2$. This is now the case of Corollary \ref{cor  2 gen m1 group exp p2 has set pth powers} above and the result follows.
\end{proof}

We next prove that for an $\mathcal{M}_1$ group, the first Agemo subgroup, $\mho_{1}(G)$ is powerful. This will help us generalise Proposition \ref{prop m1 group pth powers equal set pth powers} and will also be used in the proof Proposition \ref{prop general index result 3grps} . We also note that this is interesting in it's own right, in light of a question of Wilson, see \cite[Question 4.8]{wilsonpowerstrucpowpgroups}.
\begin{question}[{\cite[Question 4.8]{wilsonpowerstrucpowpgroups}}]
If $G$ has the property that $\mho_{k}(\mho_{k}(G))$ is the set of $p^{k}$th powers of elements of $G$ for all $k$, then must $\mho_{1}(G)$ be powerful.
\end{question}
This property has been verified for many families of groups, and we are pleased to be able to add $\mathcal{M}_1$ groups to this list of families.

\begin{proposition}
\label{prop G m1 then agemo 1 is powerful}
Let $G$ be an $\mathcal{M}_1$ group, then $\mho_1 (G)$ is powerful.
\end{proposition}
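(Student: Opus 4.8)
The plan is to verify the defining inequality $[\mho_1(G),\mho_1(G)] \le \mho_1(\mho_1(G))$ for the group $\mho_1(G)$ (recall $p$ is odd). Since $\mho_1(G)$ is by definition generated by the set of $p$th powers $\{g^p \mid g \in G\}$, it is enough to show that these generators pairwise commute modulo $\mho_1(\mho_1(G))$; that is, I would reduce the whole statement to the single claim that $[a^p,b^p] \in \mho_1(\mho_1(G))$ for all $a,b \in G$. Indeed, $\mho_1(\mho_1(G))$ is a characteristic, hence normal, subgroup of $\mho_1(G)$, so if each such commutator lies in it, then the images of the generators commute in the quotient, the quotient is abelian, and $[\mho_1(G),\mho_1(G)] \le \mho_1(\mho_1(G))$ follows.

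The central observation is that $a^p \in \mho_1(G) \le \Phi(G)$, so every $p$th power lies in \emph{every} maximal subgroup of $G$. First I would dispose of the trivial case where $G$ is cyclic: then $\mho_1(G)$ is cyclic, hence abelian and powerful. So assume $G$ is non-cyclic. Then for any $b \in G$ the subgroup $\langle b\rangle$ is proper, hence contained in some maximal subgroup $M$, which is powerful by hypothesis. Now $b \in M$ gives $b^p \in \mho_1(M)$, while $a^p \in \Phi(G) \le M$ automatically.

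With both elements placed inside the single powerful subgroup $M$, I would finish using the standard properties of powerful $p$-groups recorded earlier. Property (i) gives $[\mho_1(M),M] \le \mho_2(M)$, so $[a^p,b^p] = [b^p,a^p]^{-1} \in \mho_2(M) = \mho_1(\mho_1(M))$, the last equality being property (iii). Since $\mho_1(M) \le \mho_1(G)$ and $\mho_1$ is monotone, $\mho_1(\mho_1(M)) \le \mho_1(\mho_1(G))$, whence $[a^p,b^p] \in \mho_1(\mho_1(G))$. As $a,b$ were arbitrary, this completes the argument.

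The proof presents no serious obstacle once the key point is spotted; the only thing requiring care is the asymmetry between $a^p$ and $b^p$. What makes the argument work is that we only need \emph{one} of the two $p$th powers to be a genuine $p$th power of an element lying inside the chosen maximal subgroup, so that it lands in $\mho_1(M)$, while the other merely needs to lie in $M$, which is automatic because every $p$th power lies in $\Phi(G)$. I note in particular that this argument uses neither the earlier reduction to $2$-generator groups nor the identity $\mho_1(G) = \{g^p \mid g \in G\}$, and it works for any odd prime $p$ and any number of generators.
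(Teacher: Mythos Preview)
Your proof is correct, and it is in fact cleaner than the paper's own argument. The core idea is the same --- place the two $p$th powers inside a single powerful maximal subgroup and exploit the containment $[\mho_1(M),M]\le\mho_2(M)$ --- but the implementations differ in a way worth noting.

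The paper first invokes Proposition~\ref{prop m1 group pth powers equal set pth powers} to identify $\mho_1(\mho_1(G))$ with $\mho_2(G)$, quotients out by it to force exponent $p^2$, and then argues inside that quotient: every element of $\mho_1(H)$ can be written as a single $p$th power $x^p$, one finds a maximal $M$ containing $x$, observes $M$ has class at most $2$, and computes $[x^p,b]=[x,b]^p$. This chain of steps leans on the non-trivial earlier result (ultimately on Mann's theorem that $P_2$ groups are $P_1$). Your argument sidesteps all of this: by working with the \emph{generating set} $\{g^p\}$ of $\mho_1(G)$ rather than with arbitrary elements, you never need to know that every element of $\mho_1(G)$ is a $p$th power, and you avoid the quotient construction entirely. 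The asymmetric placement --- $b^p\in\mho_1(M)$ because $b\in M$, while $a^p\in M$ just because $a^p\in\Phi(G)$ --- is exactly the same trick the paper uses, but you apply it one level up, directly to the generators, which makes the dependence on Proposition~\ref{prop m1 group pth powers equal set pth powers} disappear. Your final remark is therefore accurate: the proposition becomes logically independent of the $\mho_1(G)=\{g^p\}$ result, which is a genuine improvement in the paper's internal structure.
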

\begin{proof}
First, observe that by Proposition \ref{prop m1 group pth powers equal set pth powers} we know that $\mho_1 (G) = \{ g^p \mid g \in G \}$. Thus $\mho_{1}(\mho_{1}(G))= \langle g^{p^{2}} \mid g \in G\rangle = \mho_{2}(G)$. We wish to show that $\mho_1(G)$ is powerful. To do this, we may quotient by $\mho_{2}(G)=\mho_{1}(\mho_{1}(G))$ and show that the resulting group $H=G/\mho_{2}(G)$, which is of exponent at most $p^2$, has $\mho_{1}(H)$ being abelian. (Recall that the Agemo subgroups behave well under taking quotients so $\mho_{1}(\frac{G}{\mho_{2}(G)}) = \frac{\mho_{1}(G)}{\mho_{2}(G)}$).

Let $a,b \in \mho_{1}(H)$; as $H$ is still an $\mathcal{M}_1$ group, then by Proposition \ref{prop m1 group pth powers equal set pth powers} we can write $a=x^p$ and $b=y^p$ for some $x,y \in H$. Let $M$ be any maximal subgroup of $H$ which contains $x$. Note that $b$ is contained in all maximal subgroups of $H$ (as it is a $p$th power). We observe that the nilpotency class of $M$ is at most $2$, since $[M,M,M] \leq [\mho_{1}(M), M] \leq M^{p^{2}}=1$. Then as $x$ and $b$ are both contained in $M$, we have that $[a,b]=[x^p,b]=[x,b]^p \in \mho_{1}(\mho_{1}(H))=1$. In particular $\mho_1(H)$ is abelian. The result now follows.

\end{proof}

We can now prove that for $\mathcal{M}_1$ groups, the product of any two $p^i$th powers is always a $p^i$th power. Indeed, by Proposition \ref{prop m1 group pth powers equal set pth powers} we know that $\mho_1(G)= \{ g^p \mid g \in G \}$. Then for $i>1$, we have that 
$$a^{p^{i}}b^{p^{i}} = (a^p)^{p^{i-1}}(b^p)^{p^{i-1}}=c^{p^{i-1}}$$ for some $c \in \mho_1(G)$, since $\mho_{1}(G)$ is powerful by Proposition \ref{prop G m1 then agemo 1 is powerful}. However then $c=d^p$ for some $d \in G$ by Proposition \ref{prop m1 group pth powers equal set pth powers}. Hence $a^{p^{i}}b^{p^{i}}=d^{p^{i}}$.
Thus we have proved the following theorem.
\begin{theorem}
\label{theorem agemo i is p i th powers}
Let $G$ be an $\mathcal{M}_1$ group. Then $\mho_i(G) = \{ g^{p^{i}} \mid g \in G \}$ for $i \geq 1$.
\end{theorem}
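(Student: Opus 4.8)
The plan is to reduce the general index-$i$ statement back to the already-established index-one case, Proposition~\ref{prop m1 group pth powers equal set pth powers}. For every $i$ the inclusion $\{g^{p^i}\mid g\in G\}\subseteq\mho_i(G)$ holds trivially, so the entire content of condition~(\ref{eqn pth power}) is that the set of $p^i$th powers is already closed under multiplication; since $G$ is finite, this is exactly what forces that set to coincide with the subgroup $\mho_i(G)$ it generates. Thus I would take two arbitrary elements $a,b\in G$ and aim to write $a^{p^i}b^{p^i}$ as a single $p^i$th power of one element of $G$.

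The key step is to push the problem into the subgroup $\mho_1(G)$, where two powerful-group facts are available: by Proposition~\ref{prop G m1 then agemo 1 is powerful} the subgroup $\mho_1(G)$ is itself powerful, and by Theorem~\ref{theorem powerful p-groups regular power structure} powerful $p$-groups satisfy condition~(\ref{eqn pth power}) at every index. I would rewrite $a^{p^i}=(a^p)^{p^{i-1}}$ and $b^{p^i}=(b^p)^{p^{i-1}}$, observe that $a^p,b^p\in\mho_1(G)$, and apply the regular power structure of $\mho_1(G)$ at index $i-1$. The two factors $(a^p)^{p^{i-1}}$ and $(b^p)^{p^{i-1}}$, being $p^{i-1}$th powers of elements of $\mho_1(G)$, lie in $\mho_{i-1}(\mho_1(G))$, and so their product equals $c^{p^{i-1}}$ for a single $c\in\mho_1(G)$. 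This collapses the two powers into one, but only as a power inside $\mho_1(G)$.

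The final move is to convert this back into a genuine $p^i$th power in $G$. Here I would invoke Proposition~\ref{prop m1 group pth powers equal set pth powers} a second time: since $c\in\mho_1(G)=\{g^p\mid g\in G\}$, we may write $c=d^p$ for some $d\in G$, whence $a^{p^i}b^{p^i}=c^{p^{i-1}}=(d^p)^{p^{i-1}}=d^{p^i}$. This establishes closure of the set of $p^i$th powers under products, and the theorem follows.

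I do not expect a genuine obstacle at this stage, precisely because the two hard ingredients have already been secured: the index-one identity and, more importantly, the powerfulness of $\mho_1(G)$. The one point that repays care is that the argument uses those two results in distinct roles. Powerfulness of $\mho_1(G)$, combined with the regular power structure of powerful groups, is what lets the two $p^{i-1}$th powers be amalgamated \emph{within} $\mho_1(G)$; the index-one identity is what lets the resulting element $c$ be re-expressed as a true $p$th power of an element of the whole group $G$. Both are needed, and neither can be replaced by the trivial containment $\mho_{i-1}(\mho_1(G))\subseteq\mho_i(G)$ on its own.
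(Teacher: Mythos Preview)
Your proposal is correct and follows essentially the same approach as the paper: rewrite $a^{p^i}b^{p^i}=(a^p)^{p^{i-1}}(b^p)^{p^{i-1}}$, use the powerfulness of $\mho_1(G)$ (Proposition~\ref{prop G m1 then agemo 1 is powerful}) to amalgamate this into a single $c^{p^{i-1}}$ with $c\in\mho_1(G)$, and then use Proposition~\ref{prop m1 group pth powers equal set pth powers} to write $c=d^p$. The paper's proof is exactly this argument, stated more tersely.
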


\subsection{} 
\label{section index property p=3}
We now address the remaining power structure property, condition \ref{eqn index condition}. We recall the condition below. 
\begin{align}
 |G:\mho_{i}(G)| &= | \Omega_{i}(G)|.  \tag{\ref{eqn index condition}}
\end{align}
This condition may seem the least natural of the three. We offer here one possible way to motivate this condition: For an abelian $p$-group $G$, the $p^i$th power map on $G$ is a homomorphism with image $\mho_{i}(G)$ and kernel $\Omega_{i}(G)$, and thus condition \ref{eqn index condition} follows by the isomorphism theorems. For $p$-groups in general (and even for groups with a regular power structure) this map need not be a homomorphism, but perhaps this property could be thought of as trying to capture some essence of that behaviour. \par

We believe that status of condition \ref{eqn index condition} for $\mathcal{M}_1$ groups is surprising. The first two properties have now been established for $\mathcal{M}_1$ groups and any odd prime $p$. Furthermore by Proposition \ref{proposition mi d(G) >= i+2 then potent} we know that the final condition holds for primes $p \geq 5$, and by Proposition \ref{proposition mi d(G) >= i+2 then potent} all regular power structure properties hold for $p=3$ if $d(G)>2$. Thus given all the cases where condition \ref{eqn index condition} holds, it is surprising that in the remaining case of $\mathcal{M}_1$ $3$-groups $G$ with $d(G)=2$, we will see that the condition \ref{eqn index condition} need not hold. Even more intriguing is that there is only one example where this conditions fails - and in this case the group is of a very small order, $|G|=3^4$. 

\begin{example}
\label{example smallgroup 81 10}
The following group can be easily constructed in GAP as \texttt{SmallGroup(81,10)}. For completeness we list below a power commutator presentation for the group.
$$J = \langle a_1, a_2, a_3, a_4 \mid a_1^3 = a_4, a_2^3 = (a_4)^2, a_3^3 =1, a_4^3=1, [a_2,a_1]=a_3, [a_3, a_1]=a_4   \rangle,$$ the four remaining commutator relations not listed are trivial.  

We state some properties of this group - these can be readily verified in GAP, or with more effort, by hand.\par

This group has nilpotency class $3$ and order $3^4$ and so is a group of maximal class. It has four maximal subgroups, one of which is abelian (isomorphic to $C_3 \times C_9$), and the other three all isomorphic to $$\langle x, y \mid x^3, y^9, [x,y]=y^3 \rangle,$$ a semidirect product of the form $C_9 \rtimes C_3$. 

Then it is clear that our group is an $\mathcal{M}_1$ group. 

In this group we have that $\mho_{1}(J) = \langle a_4 \rangle $ and $\Omega_{1}(J) = \langle a_3, a_4 \rangle$. Hence we have that $|\mho_1 (J)| |\Omega_{1}(J)| = 27 \neq |J| = 81$, and so the group does not satisfy condition \ref{eqn index condition}. We also remark here that this is the only $\mathcal{M}_1$ group of order $3^4$ and of maximal class (nilpotency class $3$), which can be readily verified in GAP.
\end{example}

For $p$ odd, this is the only example of an $\mathcal{M}_1$ $p$-group which does satisfy condition \ref{eqn index condition}. Proving this, and the method we use to do so, is one of the main contributions of the paper.

By the reductions outlined at the start of Section \ref{section on power structure of m1 groups}, we only need to be concerned with $2$-generator $3$-groups. Unlike previous sections where we still offered arguments that worked for any odd prime, here we will take advantage of these reductions and focus only on $p=3$. In contrast to the very theoretical arguments in the previous section, we now move to more concrete arguments making use of the classification of $3$-groups of small order.

As all groups of exponent $3$ are regular, it is clear that the smallest exponent for which a counterexample to condition \ref{eqn index condition} can occur is exponent $3^2$. We exhibited such a group in Example \ref{example smallgroup 81 10}. To begin, we show that it is the only counterexample of exponent $3^2$. 

\begin{lemma}
\label{lemma G m1 group exp p2 class at most 3}
Let $G$ be an $\mathcal{M}_1$ group of exponent $p^2$. Then the nilpotency class of $G$ is at most $3$.
\end{lemma}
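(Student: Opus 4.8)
The plan is to split on the number of generators and then reduce everything to understanding the weight-four commutators in a two-generator group. If $d(G)\ge 3$, then Proposition \ref{proposition mi d(G) >= i+2 then potent} (with $i=1$) shows $G$ is powerful; since $\exp G\le p^2$ we have $\mho_2(G)=1$, and the powerful identity $\gamma_3(G)\le\mho_2(G)$ forces class at most $2$. So I may assume $d(G)=2$, say $G=\langle a,b\rangle$. Before computing I would record three facts. First, each maximal subgroup $M$ is powerful of exponent dividing $p^2$, hence $\gamma_3(M)\le\mho_2(M)=1$, so every maximal subgroup has class at most $2$. Second, $G/\mho_1(G)$ is an $\mathcal{M}_1$ group of exponent $p$, so by Proposition \ref{prop G exponent p mi group class at ost i+1} it has class at most $2$; therefore $\gamma_3(G)\le\mho_1(G)$. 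Third, every element of $\mho_1(G)$ is a $p$th power by Proposition \ref{prop m1 group pth powers equal set pth powers} and $\mho_1(G)$ is powerful by Proposition \ref{prop G m1 then agemo 1 is powerful}; as $(g^p)^p=1$ this makes $\mho_1(G)$ powerful of exponent $p$, hence elementary abelian. In particular $\gamma_3(G)$ and $\gamma_4(G)$ are abelian of exponent $p$.

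It then suffices to prove $\gamma_4(G)=1$. Since $G$ is $2$-generated, $\gamma_4(G)/\gamma_5(G)$ is spanned by the three basic commutators $[a,b,a,a]$, $[a,b,a,b]$ and $[a,b,b,b]$. Using $[a,b]\in\Phi(G)\le M_1$ with $M_1=\langle a,\Phi(G)\rangle$ of class at most $2$, I get $[a,b,a,a]\in\gamma_3(M_1)=1$; symmetrically $[a,b,b,b]\in\gamma_3(M_2)=1$ with $M_2=\langle b,\Phi(G)\rangle$. This leaves the single commutator $[a,b,a,b]$, which is the crux of the argument, since its last two entries mix $a$ and $b$ and so it does not lie in $\gamma_3$ of either of these maximal subgroups.

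To handle $[a,b,a,b]$ I would bring in a third maximal subgroup $M_3=\langle ab,\Phi(G)\rangle$ (here $p$ odd guarantees it is distinct from $M_1,M_2$). Since $[a,b]\in\Phi(G)\le M_3$ and $ab\in M_3$, expanding $[[a,b],ab,ab]\in\gamma_3(M_3)=1$ modulo $\gamma_5(G)$, using bilinearity of commutators modulo $\gamma_5(G)$ (legitimate because $\gamma_3(G)$ and $\gamma_4(G)$ are abelian) together with the two vanishing commutators above, yields $[a,b,a,b]\,[a,b,b,a]\equiv 1 \pmod{\gamma_5(G)}$. On the other hand, the Jacobi identity in the associated Lie ring gives $[a,b,a,b]\equiv[a,b,b,a]\pmod{\gamma_5(G)}$. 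Combining these two congruences gives $[a,b,a,b]^2\in\gamma_5(G)$; since $\gamma_4(G)$ has exponent $p$ and $p$ is odd, $2$ is invertible modulo $p$ and hence $[a,b,a,b]\in\gamma_5(G)$. Therefore $\gamma_4(G)=\gamma_5(G)$, and nilpotency forces $\gamma_4(G)=1$, so the class is at most $3$.

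The main obstacle is precisely the mixed commutator $[a,b,a,b]$: the two pure weight-four commutators die immediately inside a single class-$2$ maximal subgroup, but the mixed one requires playing the third maximal subgroup against the Jacobi relation and exploiting that $p$ is odd. I would take care over the bilinearity-modulo-$\gamma_5(G)$ manipulations and the exact Lie-ring form of the Jacobi identity, as these are where indexing slips are most likely to creep in.
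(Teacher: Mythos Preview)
Your proof is correct, but it takes a genuinely different route from the paper's. The paper's argument is a two-liner: since every maximal subgroup $M$ is powerful of exponent at most $p^{2}$, one has $\gamma_{3}(M)\le\mho_{2}(M)=1$, so every proper subgroup has class at most $2$; then it invokes MacDonald's theorem \cite[Corollary~1 to Theorem~1]{Macdonaldclassicaltheoremonnilpotentgroups}, which says that a group all of whose proper subgroups have class at most $2$ has class at most $3$.

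Your argument instead reproves this instance of MacDonald's result by hand: after reducing to $d(G)=2$ you kill the weight-four basic commutators one at a time, the two ``pure'' ones via $\gamma_{3}(M_{1})=\gamma_{3}(M_{2})=1$ and the mixed one by combining $\gamma_{3}(M_{3})=1$ with the Jacobi relation $[a,b,a,b]\equiv[a,b,b,a]\pmod{\gamma_{5}}$ and the oddness of $p$. This is entirely sound. Two minor remarks: first, your ``third fact'' (that $\mho_{1}(G)$ is elementary abelian, via Propositions~\ref{prop m1 group pth powers equal set pth powers} and~\ref{prop G m1 then agemo 1 is powerful}) is more than you need---the step $[a,b,a,b]^{2}\in\gamma_{5}\Rightarrow[a,b,a,b]\in\gamma_{5}$ only requires that $\gamma_{4}/\gamma_{5}$ is a $p$-group with $p$ odd, which is automatic. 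Second, the bilinearity you use is just the standard bilinearity in the associated graded Lie ring and does not depend on $\gamma_{3}$ or $\gamma_{4}$ being abelian. So facts two and three could be dropped and the core computation still goes through.

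What each approach buys: the paper's is short and conceptual but imports an external structural theorem; yours is self-contained and makes explicit \emph{why} class $3$ is the right bound here, at the cost of a longer commutator calculation. Your version also makes visible that the only genuinely delicate point is the mixed commutator $[a,b,a,b]$, which is a nice structural observation.
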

\begin{proof}
Every maximal subgroup of $G$ is powerful and of exponent at most $p^2$. For a powerful $p$-group $H$ we have that $[H,H,H] \leq H^{p^{2}}$ and so it follows that every proper subgroup of $G$ must have nilpotency class at most $2$. Now, by a remarkable theorem of MacDonald \cite[Corollary 1 to Theorem 1]{Macdonaldclassicaltheoremonnilpotentgroups}, it is known that if all proper subgroups of $G$ have class at most $2$ then $G$ has class at most $3$.
\end{proof}

\begin{lemma}
\label{lemma only one coutner example exp 9}
There is exactly one $2$-generator, $\mathcal{M}_1$ $3$-group of exponent $9$ which does not satisfy $|G|=|\mho_1 (G)| | \Omega_{1}(G)|$.
\end{lemma}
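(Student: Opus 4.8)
The plan is to pin down any counterexample tightly enough that a finite computer search over the SmallGroups library completes the argument, the single survivor being the group $J$ of Example~\ref{example smallgroup 81 10}. By the standing reductions of this section I may assume $G = \langle a, b\rangle$ is a $2$-generated $\mathcal{M}_1$ $3$-group of exponent $9$. First I would dispose of the low-class cases: if $G$ has class at most $2$ then $G$ is regular (as $p = 3$), so it has a regular power structure and in particular satisfies condition~\ref{eqn index condition}; hence no such $G$ is a counterexample. Since Lemma~\ref{lemma G m1 group exp p2 class at most 3} gives class at most $3$, every counterexample must have class \emph{exactly} $3$.

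Next I would extract the structural constraints needed to bound $|G|$. Because $\exp G \le 9$ we have $\mho_2(G) = 1$, so $\mho_1(G)$ has exponent $3$; being powerful by Proposition~\ref{prop G m1 then agemo 1 is powerful}, $\mho_1(G)$ is therefore elementary abelian. Writing $M_1 = \langle a, \Phi(G)\rangle$ and $M_2 = \langle b, \Phi(G)\rangle$, each $M_k$ is powerful of exponent at most $9$ and hence of class at most $2$, so $[a,b,a] \in [M_1,M_1] \le \mho_1(G)$ and $[a,b,b] \in [M_2,M_2] \le \mho_1(G)$; as $G$ has class $3$ this yields $\gamma_3(G) = \langle [a,b,a], [a,b,b]\rangle \le \mho_1(G)$, an elementary abelian group of rank at most $2$. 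Finally $G/\mho_1(G)$ is a $2$-generated $\mathcal{M}_1$ group of exponent $3$, hence of class at most $2$ by Proposition~\ref{prop G exponent p mi group class at ost i+1} and of order at most $27$, while a Hall--Petrescu collection shows $\mho_1(G) = \langle a^3, b^3, [a,b]^3\rangle\,\gamma_3(G)$, which controls its rank.

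The hard part is to combine these facts into an explicit bound $|G| \le 3^7$ small enough to reach the available SmallGroups data. The naive combination of $|G:\mho_1(G)| \le 3^3$ with ``$\mho_1(G)$ has rank at most $5$'' only gives $3^8$; to save the last factor one must exclude the extremal configuration in which $G^{\mathrm{ab}} \cong C_9 \times C_9$, the quotient $G/\mho_1(G)$ is the nonabelian group of order $27$ and exponent $3$, and $a^3, b^3, [a,b]^3$ together with a rank-$2$ copy of $\gamma_3(G)$ are all independent in $\mho_1(G)$. Ruling this out requires using the $\mathcal{M}_1$ hypothesis more fully --- playing the powerfulness of the maximal subgroups against the isomorphism type of $G^{\mathrm{ab}}$ --- rather than the crude rank count, and I expect this to be the most delicate step.

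Once $|G| \le 3^7$ is established, the proof concludes computationally. I would run through the groups of order $3^n$ for $4 \le n \le 7$ in the SmallGroups library, select those that are $2$-generated, of exponent $9$, and $\mathcal{M}_1$ (testing that all four maximal subgroups are powerful), and for each evaluate whether $|G| = |\mho_1(G)|\,|\Omega_1(G)|$. Since every counterexample lies in this finite list, verifying in GAP that exactly one group --- \texttt{SmallGroup(81,10)} --- fails the identity establishes the lemma.
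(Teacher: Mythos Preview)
Your proposal follows the same overall strategy as the paper --- use Lemma~\ref{lemma G m1 group exp p2 class at most 3} to bound the class by $3$, then reduce to a finite search in the SmallGroups library --- but you have left a genuine gap at exactly the point you flag as ``the most delicate step''. You only obtain $|G|\le 3^{8}$ from your rank count, and you never actually carry out the exclusion of the extremal configuration needed to get down to $3^{7}$. Since the groups of order $3^{8}$ are not in the standard SmallGroups library used in the paper, this missing factor of $3$ is not cosmetic: without it your computer search cannot close the argument.

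The paper sidesteps this difficulty entirely and you should too. Rather than bounding $|\mho_{1}(G)|$ by hand, one constructs (via the nilpotent quotient algorithm, package \texttt{nq}) the relatively free $2$-generator $3$-group of exponent dividing $9$ and class at most $3$; this group has order exactly $3^{8}$, and every $G$ under consideration is a quotient of it. One then simply checks that this free object is \emph{not} an $\mathcal{M}_{1}$ group. Consequently any $2$-generator $\mathcal{M}_{1}$ $3$-group of exponent $9$ is a \emph{proper} quotient, hence of order at most $3^{7}$, and the SmallGroups search you describe then finishes the proof. This replaces your unproved case analysis by a single computational verification at order $3^{8}$, which is both easier and more robust.
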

\begin{proof}
Let $G= \langle a, b \rangle $ be a $2$-generator $\mathcal{M}_1$ group of exponent $9$. By Lemma \ref{lemma G m1 group exp p2 class at most 3} the class of $G$ is at most $3$. The largest (most free) $2$-generator $3$-group of exponent at most $9$ and nilpotency class at most $3$ has order $3^8$. This group can be constructed using the nilpotent quotient algorithm from the GAP package "nq" \cite{nq2.5.4}. This group is not a $\mathcal{M}_1$ group, but any $2$-generator $\mathcal{M}_1$ group with exponent at most $9$ and class at most $3$ will be a quotient of this group. In particular their order will be at most $3^7$.

We have now reduced to a finite problem, and as all the $3$-groups of orders $3^n$, $n \in \{1, \dots, 7 \}$ are classified we can check the classification and see that there is indeed only one counterexample. The group of Example \ref{example smallgroup 81 10}.
\end{proof}

In what follows we use the well known commutator expansion formula (see \cite{mckay2000finite}, Exercise 1.2). If $G$ is a group, $x,y \in G,$ and $n \in \mathbb{N}$ then  
\begin{equation}
\label{p hall collection  (xy)^p^n}
(xy)^{p^{n}} \equiv x^{p^{n}}y^{p^{n}} \left(\thinspace \text{mod} \thinspace \mho_{n}(\gamma_{2}(T)) \mho_{n-1}( \gamma_{p}(T))\dots \gamma_{p^{n}}(T)\right)
\end{equation} where $T= \langle x,y \rangle$. 

\begin{lemma}
\label{lemma mho1 G is either mho1M or 3 times this}
Let $G$ be a $\mathcal{M}_1$ $3$-group and let $M$ be a maximal subgroup of $G$. Then either $|\mho_1(G)| = | \mho_1(M)|$ or $|\mho_1(G)| =3 \cdot |\mho_1 (M) | $.
\end{lemma}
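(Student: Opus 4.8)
The plan is to pass to the quotient $\bar G = G/\mho_1(M)$ and reduce the statement to the claim $|\mho_1(\bar G)| \le 3$. Since $M$ is powerful we have $\Phi(M)=[M,M]\mho_1(M)=\mho_1(M)$, so $\mho_1(M)$ is characteristic in the normal subgroup $M$ and hence normal in $G$; moreover $\bar M := M/\mho_1(M)$ is elementary abelian of index $3$ in $\bar G$. As $\mho_1$ is verbal we have $\mho_1(\bar G)=\mho_1(G)/\mho_1(M)$, so the lemma is equivalent to $|\mho_1(\bar G)| \le 3$. By the reduction to $2$-generator groups (Proposition \ref{proposition mi d(G) >= i+2 then potent}) I may assume $G=\langle a,b\rangle$, and after relabelling I may take $M=\langle a,\Phi(G)\rangle$, so that $a\in M$ and $b\notin M$.

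Next I would compute all cubes in $\bar G$. Write $\sigma$ for conjugation by $b$ acting on $\bar M$. Since $[M,M]\le\mho_1(M)$, the subgroup $M$ acts trivially on $\bar M$, so $\sigma^3$ (conjugation by $b^3\in M$) is trivial; over $\mathbb F_3$ this gives $(\sigma-1)^3=0$ and $1+\sigma+\sigma^2=(\sigma-1)^2$. Expanding $(bm)^3=b^3\,m^{b^2}m^{b}m$ and projecting to the abelian group $\bar M$ yields $\overline{(bm)^3}=c+(\sigma-1)^2\bar m$, where $c:=\overline{b^3}$; likewise cubes of elements of $Mb^2$ lie in $2c+V$, while cubes of elements of $M$ are trivial. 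Hence, writing $V:=(\sigma-1)^2\bar M$,
$$\mho_1(\bar G)=\{0\}\cup(c+V)\cup(2c+V)\subseteq\bar M,$$
where I use Proposition \ref{prop m1 group pth powers equal set pth powers} (applied to the $\mathcal M_1$ group $\bar G$) to know this set is exactly the subgroup $\mho_1(\bar G)$.

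The crux is to show $|V|\le 3$. I would identify $V$ with $\gamma_3(\bar G)$: indeed $\gamma_2(\bar G)=(\sigma-1)\bar M$ and $\gamma_3(\bar G)=(\sigma-1)^2\bar M=V$, and $(\sigma-1)^3=0$ already shows $\bar G$ has class at most $3$. For a $2$-generator group of class at most $3$, $\gamma_3(\bar G)$ is generated by $[\bar a,\bar b,\bar b]$ and $[\bar a,\bar b,\bar a]$; but $[\bar a,\bar b]\in(\sigma-1)\bar M\le\bar M$ and $\bar a\in\bar M$ (because $a\in M$), so $[\bar a,\bar b,\bar a]=1$ as $\bar M$ is abelian. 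Thus $\gamma_3(\bar G)=\langle[\bar a,\bar b,\bar b]\rangle$ is cyclic, giving $|V|=|\gamma_3(\bar G)|\le 3$. This is the step I expect to be the main obstacle, since a priori $V$ could be as large as $\dim_{\mathbb F_3}\bar M$ permits; the decisive point is that one of the two generators of $G$ can be taken inside $M$, which kills one of the two weight-$3$ basic commutators.

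Finally I would conclude by counting: with $|V|\le 3$ the displayed set of cubes has at most $1+2|V|\le 7$ elements, yet it is a subgroup of the $3$-group $\bar G$ and so has order a power of $3$; the only possibilities below $9$ are $1$ and $3$. Hence $|\mho_1(\bar G)|\le 3$, which is precisely the assertion that $|\mho_1(G)|$ equals $|\mho_1(M)|$ or $3\,|\mho_1(M)|$. I would remark that the same computation, with the identity $1+\sigma+\cdots+\sigma^{p-1}=(\sigma-1)^{p-1}$ over $\mathbb F_p$, gives $|\mho_1(G):\mho_1(M)|\le p$ for every odd prime $p$.
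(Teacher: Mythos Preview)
Your proof is correct and takes a genuinely different route from the paper's. Both arguments compute cubes modulo $\mho_1(M)$ and invoke Proposition~\ref{prop m1 group pth powers equal set pth powers} to identify the set of cubes with $\mho_1$, but they diverge at the key step of bounding the ``error term'':

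\begin{itemize}
\item The paper works directly in $G$, expands $(ma^i)^3$ via the commutator formula, and then bounds $\gamma_3(T)$ by producing a \emph{second} powerful maximal subgroup $N$ containing the element $a\notin M$; since $N=\langle a,m_1,\dots,m_n\rangle$ with $m_j\in M$, one gets $\mho_1(N)\le\langle a^3,\mho_1(M)\rangle$, and $[a,m,a]\in\mho_1(N)$ lands in the right place. This uses the $\mathcal M_1$ hypothesis a second time and works for any $d(G)$.
\item You instead pass to $\bar G=G/\mho_1(M)$, compute cubes via the action of $\sigma$ on the $\mathbb F_3$-module $\bar M$, and identify $V=(\sigma-1)^2\bar M$ with $\gamma_3(\bar G)$. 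You then bound $|\gamma_3(\bar G)|\le 3$ using only the $2$-generator structure: one of the two weight-$3$ basic commutators dies because $\bar a\in\bar M$. This is cleaner linear algebra and makes the generalisation to odd $p$ transparent, but it uses the $\mathcal M_1$ hypothesis only once (for $M$).
\end{itemize}

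One small point to tighten: your appeal to Proposition~\ref{proposition mi d(G) >= i+2 then potent} for the reduction to $d(G)\le 2$ only tells you that $G$ is \emph{powerful} when $d(G)\ge 3$; it does not by itself give the conclusion of the lemma. You should add the one-line check for that case: if $G$ is powerful, choose a minimal generating set $g_1,\dots,g_r$ with $g_1\notin M$ and $g_2,\dots,g_r\in M$; then $\mho_1(G)=\langle g_1^p,\dots,g_r^p\rangle\le\langle g_1^p,\mho_1(M)\rangle$, and since $g_1^p\in M$ we have $g_1^{p^2}\in\mho_1(M)$, whence $|\mho_1(G):\mho_1(M)|\le p$. (Alternatively, you can avoid the reduction entirely: for any maximal $N\ni\bar b$ in $\bar G$ one has $V=(\sigma-1)^2\bar M=[\gamma_2(\bar G),\bar b]\le[N,N]\le\mho_1(N)=\langle\bar b^{\,p}\rangle=\langle c\rangle$, which recovers the paper's mechanism inside your framework and gives $|V|\le p$ without assuming $d(G)=2$.)
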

\begin{proof}
We may write $G= \langle M, a \rangle$ for some element $a \in G$. Then any element of the group can be written in the form $ m a^i$ for some $m \in M$ and $i \in \{ 0 ,1, 2 \}$ . 

Now we know the form of a generic element, we can consider the form of a generic $3$rd power. 

Using formula \ref{p hall collection  (xy)^p^n} above, we obtain:

$$ 
(m a^i)^3 =(m)^3 (a^i)^3  \mho_{1}(\gamma_{2}(T)) \gamma_{3}(T),
$$
where $T= \langle m, a^i \rangle$.

Notice that $\gamma_{2}(T)^3 \leq \mho_{1}(M)$. \par
We next consider  $\gamma_{3}(T) = \gamma_3(\langle m, a^i \rangle)$.   Notice that for some collection of elements $m_i \in M$ there is a maximal subgroup $N = \langle a, m_1, \dots, m_n \rangle$ in $G$. As it is maximal, it contains $\Phi(G)$ and so $\gamma_{2}(T) \leq N$. Then $[a, m, a] \in N^3 = \langle a^3, m_1^3, \dots, m_n^3$ by Proposition \ref{proposition G= a1,..,ar then G^p = a_1^p, a_r^p}. Also notice that $[a,m,m] \in M^3$. In particular it follows that $\gamma_{3}(T) \leq \langle a^3, \mho_{1}(M) \rangle$. Then it follows that $(m a^i)^3 = \hat{m}^3 a^j$ for some $m \in M$ and $j \in {0,1,2}$ (notice $a^3 \in M$). 

Then we see we have $3$ choices for $j$, and $\hat{m}^3$ can take $|\mho_{1}(M)|$ values (where we note that $\mho_{1}(M)$ consists solely of $3$rd powers because it is powerful). Then there are at most $3 \cdot |\mho_{1}(M)|$ possible $3$rd powers in $G$. 

As we proved in Theorem \ref{theorem agemo i is p i th powers}, $\mho_{1}(G)$ consists solely of $3$rd powers and so $|\mho_{1}(G)| \leq 3 \cdot |\mho_{1}(M)|$. 

Clearly $\mho_{1}(G) \geq \mho_{1}(M)$ and so either $|\mho_1(G)| = | \mho_1(M)|$ or $|\mho_1(G)| =3 \cdot |\mho_1 (M) | $ as required. 

\end{proof}

\begin{lemma}
\label{lemma deal with case mho1 G is bigger than mho1 M}
Let $G$ be an $\mathcal{M}_1$ $3$-group of exponent at least $p^2$, and let $M$ be a maximal subgroup of $G$ such that $\Omega_{1}(G) \leq M$. If $\mho_1(G) \neq \mho_1(M)$ then $|G|= |\mho_{1}(G)| |\Omega_{1}(G)|$.
\end{lemma}
\begin{proof}
$\Omega_{1}(G)$ consists of all elements of order at most $p$ in $G$ by Theorem \ref{theorem omega condition holds}. Similarly, $\Omega_{1}(M)$  consists of all elements of order $p$ in $M$, because $M$ is powerful. Then because $\Omega_{1}(G) \leq M$ we must have that 
\begin{equation}
\label{eqn omega1G = omega!M}
    \Omega_{1}(G) = \Omega_{1}(M). 
\end{equation}

Let $|G|=p^n$, then for the maximal subgroup $M$ we have $|M|=p^{n-1}$. As $M$ is powerful, we have that condition \ref{eqn index condition} holds for $M$:
\begin{equation}
\label{eqn omega 3 holds in proof}
p^{n-1} = |M| = |\mho_{1}(M)| |\Omega_{1}(M)|.
\end{equation}

By hypothesis  $\mho_{1}(G) \neq \mho_{1}(M)$ and so we have $|\mho_{1}(G)| = p|\mho_{1}(M)|$ by Lemma \ref{lemma mho1 G is either mho1M or 3 times this}. Hence 
\begin{align*}
| \mho_1 (G)| | \Omega_{1}(G)| &= p | \mho_1 (M)| | \Omega_{1}(G)| \tag{\text{Lemma \ref{lemma mho1 G is either mho1M or 3 times this}}}\\
                                &= p | \mho_1 (M)| | \Omega_{1}(M)| \tag{\text{by \ref{eqn omega1G = omega!M}}}\\
                               &= p \cdot p^{n-1} \tag{\text{by \ref{eqn omega 3 holds in proof}}}\\
                                &= |G|.
\end{align*}

\end{proof}

We have seen that there is only one counterexample of  exponent  at most $9$. We now wish to show there are no counterexamples of exponent $3^i$ with $i \geq 3$. In this case, by Theorem \ref{theorem omega condition holds} we know that $\exp \Omega_{1}(G) = 3$ and so we must have that $\Omega_{1}(G)$ is contained in a maximal subgroup $M$ of $G$. Then by Lemma \ref{lemma deal with case mho1 G is bigger than mho1 M} in the cases that follow we assume we have that $\mho_{1}(G) = \mho_{1}(M)$.

The point of the following lemma is to say if a counterexample $G$ exists, then we can bound the size of $\Omega_{1}(G)$. This will allow us to reduce our search for counterexamples to checking a finite number of groups later on.

\begin{lemma}
\label{lemma omega 1 has size at most 9}
Let $G$ be a $\mathcal{M}_1$, $2$-generator $3$-group, such that $\Omega_{1}(G) \leq M$ for some maximal subgroup $M$ and additionally that $|\mho_{1}(G)| = |\mho_{1}(M)|$. Then $|\Omega_{1}(G)| \leq 3^2$.
\end{lemma}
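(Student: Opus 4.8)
The plan is to combine the power structure of the powerful maximal subgroup $M$ with a Burnside-type bound on $G/\mho_{1}(G)$. Write $|G| = 3^{n}$, so that $|M| = 3^{n-1}$. First I would record two elementary consequences of the hypotheses. Since $\Omega_{1}(G) \leq M$ and, by Theorem \ref{theorem omega condition holds}, $\Omega_{1}(G)$ is exactly the set of elements of $G$ of order at most $3$, every element of $M$ of order at most $3$ already lies in $\Omega_{1}(G)$; as $M$ is powerful its elements of order at most $3$ form $\Omega_{1}(M)$, so $\Omega_{1}(G) = \Omega_{1}(M)$. Moreover, since $M$ is powerful it has a regular power structure by Theorem \ref{theorem powerful p-groups regular power structure}, and in particular satisfies condition \ref{eqn index condition}, giving $3^{n-1} = |M| = |\mho_{1}(M)|\,|\Omega_{1}(M)| = |\mho_{1}(M)|\,|\Omega_{1}(G)|$.

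The key step is to bound $|\mho_{1}(G)|$ from below. I would pass to the quotient $\bar{G} = G/\mho_{1}(G)$. Every cube in $G$ lies in $\mho_{1}(G)$, so $\bar{G}$ has exponent dividing $3$, and $\bar{G}$ is $2$-generated because $G$ is. Hence $\bar{G}$ is a quotient of the largest $2$-generator group of exponent $3$, which has order $27$; therefore $|G : \mho_{1}(G)| \leq 3^{3}$ and $|\mho_{1}(G)| \geq 3^{n-3}$. Finally, the hypothesis $|\mho_{1}(G)| = |\mho_{1}(M)|$ together with the containment $\mho_{1}(M) \leq \mho_{1}(G)$ forces $\mho_{1}(M) = \mho_{1}(G)$, so $|\mho_{1}(M)| \geq 3^{n-3}$ as well.

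Putting these together,
$$|\Omega_{1}(G)| = |\Omega_{1}(M)| = \frac{|M|}{|\mho_{1}(M)|} = \frac{3^{n-1}}{|\mho_{1}(M)|} \leq \frac{3^{n-1}}{3^{n-3}} = 3^{2},$$
as required. I expect the only real content to be the observation that $G/\mho_{1}(G)$ has exponent $3$, which lets the $2$-generator Burnside bound control $|\mho_{1}(G)|$ from below; everything else is bookkeeping with the index condition for the powerful subgroup $M$. The two standing assumptions, that $\Omega_{1}(G) \leq M$ and $|\mho_{1}(G)| = |\mho_{1}(M)|$, are precisely what is needed to transfer the regular power structure of $M$ onto the relevant subgroups of $G$, so no genuine obstacle is anticipated beyond assembling these ingredients correctly.
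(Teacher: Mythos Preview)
Your proof is correct and follows essentially the same route as the paper: identify $\Omega_{1}(G)=\Omega_{1}(M)$, use the regular power structure of the powerful subgroup $M$ to write $|M|=|\mho_{1}(M)||\Omega_{1}(M)|$, and then invoke the Burnside bound $|G/\mho_{1}(G)|\leq 27$ for a $2$-generator group of exponent $3$ to conclude. The paper arranges the arithmetic slightly differently (writing $3\cdot|\Omega_{1}(G)|=|G/\mho_{1}(G)|$ directly), but the ingredients and logic are identical.
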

\begin{proof}
As $M$ is powerful we have $|\mho_1(M)| |\Omega_1 (M)| = |M|$. As $\Omega_1(G) \leq M$ we have $\Omega_{1}(G) = \Omega_{1}(M)$, and by the hypothesis we have $|\mho_{1}(G)| = |\mho_{1}(M)|$. Hence
$$ 
|\mho_1(G)| |\Omega_1 (G)|=|\mho_1(M)| |\Omega_1 (M)| = |M| = |G|/3.
$$
Rearranging yields:
$$3 \cdot |\Omega_{1}(G)| = |G|/|\mho_1(G)| = |G/\mho_{1}(G)|.$$
The group $G/ \mho_{1}(G)$ is $2$ generator of exponent $3$. It is well known the largest such $3$-group has order $3^3$. 
Hence $3 \cdot |\Omega_{1}(G)| \leq 3^3$ and so $|\Omega_{1}(G)| \leq 3^2$ as required.
\end{proof}

In the proof of the following lemma we use a result due to L. Wilson from \cite{wilsonpowerstrucpowpgroups}. We recall some notation from \cite{wilsonpowerstrucpowpgroups}. 
 
We let $\mathcal{O}_p$ be the class of all $p$-groups for which $\Omega_{k}(G)$ is the set of elements of order dividing $p^k$ for all $k$. Theorem \ref{theorem omega condition holds} tells us that for odd primes $p$, $\mathcal{M}_1$ $p$-groups are in $\mathcal{O}_p$.

\begin{lemma}[{\cite[Lemma 2.1]{wilsonpowerstrucpowpgroups}}]
\label{lemma wilson omegas order}
Let $G$ be in $\mathcal{O}_p$. Then for all $m$ and $k$ 
$$\Omega_{k}(G/\Omega_{m}(G))=\Omega_{m+k}(G)/\Omega_{m}(G).$$
\end{lemma}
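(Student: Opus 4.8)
The plan is to reduce everything to a single chain of equivalences, exploiting the fact that membership in $\mathcal{O}_p$ collapses the distinction between the subgroup $\Omega_j(G)$ and the \emph{set} of elements of order dividing $p^j$. Write $N=\Omega_m(G)$ and $\bar G = G/N$, and denote by $\pi\colon G \to \bar G$ the quotient map. Since an element of order dividing $p^m$ certainly has order dividing $p^{m+k}$, we have $N = \Omega_m(G) \leq \Omega_{m+k}(G)$, so the right-hand side $\Omega_{m+k}(G)/N$ is a well-defined subgroup of $\bar G$; moreover, because $N$ is contained in $\Omega_{m+k}(G)$, the full preimage $\pi^{-1}\!\left(\Omega_{m+k}(G)/N\right)$ is exactly $\Omega_{m+k}(G)$. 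The whole proof then consists of identifying the set of elements of $\bar G$ of order dividing $p^k$ with this subgroup.

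First I would record what $G \in \mathcal{O}_p$ buys us: for every non-negative integer $j$, $\Omega_j(G) = \{\, g \in G \mid g^{p^j} = 1 \,\}$. The central computation is the following chain, valid for an arbitrary $g \in G$:
\[
\bar g^{\,p^k} = \bar 1 \iff g^{p^k} \in N \iff (g^{p^k})^{p^m} = 1 \iff g^{p^{m+k}} = 1 \iff g \in \Omega_{m+k}(G).
\]
Here the first equivalence is just $\bar g^{\,p^k} = \overline{g^{p^k}}$; the second uses $\mathcal{O}_p$ to rewrite membership in $N = \Omega_m(G)$ as the order condition $(g^{p^k})^{p^m}=1$; the third is the identity $(g^{p^k})^{p^m} = g^{p^{m+k}}$; and the fourth again uses $\mathcal{O}_p$, this time at level $m+k$. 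Combining with the previous paragraph, the last term is equivalent to $\bar g \in \Omega_{m+k}(G)/N$.

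The chain shows that $\{\, \bar g \in \bar G \mid \bar g^{\,p^k} = \bar 1 \,\} = \Omega_{m+k}(G)/N$. In particular, the set of elements of $\bar G$ of order dividing $p^k$ is already a subgroup, so the subgroup it generates is itself; that is, $\Omega_k(\bar G) = \Omega_{m+k}(G)/N$, as required. I expect the only real subtlety, rather than an obstacle, to be the disciplined use of the $\mathcal{O}_p$ hypothesis at both levels $m$ and $m+k$: without it one can only assert the inclusions $\Omega_{\{j\}}(G) \subseteq \Omega_j(G)$, and the two middle equivalences in the chain would degrade to one-way implications, breaking the argument. It is worth noting that I never need $\bar G \in \mathcal{O}_p$; the conclusion is extracted directly from the hypothesis on $G$.
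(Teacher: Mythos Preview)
Your proof is correct. The paper does not give its own proof of this lemma: it is quoted verbatim from \cite[Lemma 2.1]{wilsonpowerstrucpowpgroups} and used as a black box, so there is no in-paper argument to compare against. Your chain of equivalences is exactly the natural direct proof, and your observation that the $\mathcal{O}_p$ hypothesis must be invoked at both levels $m$ and $m+k$ (and that no assumption on $\bar G$ is needed) is precisely the point of the lemma.
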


\begin{lemma}
\label{lemma if G/omega satisfies then so does G}
If $G$ is a $\mathcal{M}_1$, $2$-generator, $3$-group  of exponent at least $3^3$ and $H=G/ \Omega_{1}(G)$ satisfies $|H|= |\Omega_{1}(H)| | \mho_{1}(H)|$, then $|G|= |\Omega_{1}(G)| | \mho_{1}(G)|$.
\end{lemma}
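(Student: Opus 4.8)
The plan is to exploit the induction already visible in the statement: $H=G/\Omega_1(G)$ has strictly smaller exponent than $G$, so its index condition is available, and I want to transfer it upward. First I would translate the hypothesis $|H|=|\Omega_1(H)|\,|\mho_1(H)|$ into a statement purely about $G$. Writing $N=\Omega_1(G)$, the verbal behaviour of Agemo subgroups gives $\mho_1(H)=\mho_1(G)N/N$, so by the second isomorphism theorem $|\mho_1(H)|=|\mho_1(G):\mho_1(G)\cap N|$. Since $G$ is an $\mathcal{M}_1$ group it lies in $\mathcal{O}_p$ by Theorem \ref{theorem omega condition holds}, so Wilson's Lemma \ref{lemma wilson omegas order} (with $m=k=1$) gives $\Omega_1(H)=\Omega_2(G)/N$ and hence $|\Omega_1(H)|=|\Omega_2(G):N|$. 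Substituting these, together with $|H|=|G:N|$, into the hypothesis and cancelling $|\mho_1(G)|$, the desired conclusion $|G|=|N|\,|\mho_1(G)|$ is seen to be equivalent to the single identity
\begin{equation}
|\Omega_2(G)| = |N|\,\bigl|\mho_1(G)\cap N\bigr|. \tag{$\star$}
\end{equation}
So the whole lemma reduces to proving $(\star)$.

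To prove $(\star)$ I would pass to a powerful subgroup. Because $\exp G\ge 3^3$ while $\exp\Omega_2(G)\le p^2$ by Theorem \ref{theorem omega condition holds}, the subgroup $\Omega_2(G)$ is proper, so it is contained in some maximal subgroup $M$, which is powerful as $G$ is $\mathcal{M}_1$. I then claim the three equalities $\Omega_2(G)=\Omega_2(M)$, $N=\Omega_1(M)$ and $\mho_1(G)\cap N=\mho_1(M)\cap\Omega_1(M)$. The first two are immediate, since $N\le\Omega_2(G)\le M$ and these Omega subgroups collect exactly the elements of the appropriate order (Theorem \ref{theorem omega condition holds}). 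The third is the only step that needs care: the inclusion $\supseteq$ is clear from $\mho_1(M)\le\mho_1(G)$, and for $\subseteq$ I would take $x\in\mho_1(G)\cap N$, use condition \ref{eqn pth power} (valid by Theorem \ref{theorem agemo i is p i th powers}) to write $x=g^p$, note that $x^p=1$ forces $g^{p^2}=1$ and hence $g\in\Omega_2(G)\le M$, and conclude $x=g^p\in\mho_1(M)$.

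Finally I would establish $(\star)$ entirely inside the powerful group $M$, where Theorem \ref{theorem powerful p-groups regular power structure} supplies a regular power structure. Condition \ref{eqn index condition} for $M$ gives $|\Omega_2(M)|=|M:\mho_2(M)|$ and $|\Omega_1(M)|=|M:\mho_1(M)|$; the subgroup $\mho_1(M)$ is again powerful with $\mho_1(\mho_1(M))=\mho_2(M)$, so applying condition \ref{eqn index condition} to it yields $|\Omega_1(\mho_1(M))|=|\mho_1(M):\mho_2(M)|$, and one checks $\Omega_1(\mho_1(M))=\mho_1(M)\cap\Omega_1(M)$. Running through the index tower $|M:\mho_2(M)|=|M:\mho_1(M)|\,|\mho_1(M):\mho_2(M)|$ then gives $|\Omega_2(M)|=|\Omega_1(M)|\,|\mho_1(M)\cap\Omega_1(M)|$, which is exactly $(\star)$ after transporting along the three equalities above. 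The main obstacle, and the place where the $\mathcal{M}_1$ hypothesis really enters, is that third equality: without condition \ref{eqn pth power} one cannot guarantee that a $p$th power of order $p$ in $G$ is realised as a $p$th power inside the chosen maximal subgroup $M$, and it is precisely this fact that lets the count collapse onto the powerful case.
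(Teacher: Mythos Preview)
Your proof is correct, and it takes a genuinely different route from the paper's. The paper argues by case analysis on whether $\mho_1(G)=\mho_1(M)$ for a maximal subgroup $M\ge\Omega_1(G)$: if not, the result follows from the earlier Lemma~\ref{lemma deal with case mho1 G is bigger than mho1 M} (which in turn relies on the Hall collection computation of Lemma~\ref{lemma mho1 G is either mho1M or 3 times this}); if so, comparing the index condition for $\bar M$ with the hypothesis for $\bar G$ yields $3^{n-1-a}=3^{n-a}$, a contradiction. Your argument bypasses this dichotomy entirely by choosing $M$ to contain $\Omega_2(G)$ rather than just $\Omega_1(G)$, and then using condition~\ref{eqn pth power} to show $\mho_1(G)\cap\Omega_1(G)=\mho_1(M)\cap\Omega_1(M)$; this lets the identity $(\star)$ be verified wholly inside the powerful group $M$ via its regular power structure. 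Your approach is more self-contained---it does not need Lemmas~\ref{lemma mho1 G is either mho1M or 3 times this} or~\ref{lemma deal with case mho1 G is bigger than mho1 M}, and in fact uses neither the $2$-generator hypothesis nor $p=3$. What the paper's route buys is the additional structural conclusion that the case $\mho_1(G)=\mho_1(M)$ cannot occur under the hypothesis, which your direct computation does not surface.
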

\begin{proof}
As the exponent of $G$ is at least $3^3$, and the group satisfies condition \ref{eqn omega conditon} by Theorem \ref{theorem omega condition holds}, then $ \exp \Omega_2(G) \leq 9$ and so $\Omega_{1}(G)$ is a proper subgroup of $G$ and thus is contained in some maximal subgroup $M$. Then clearly $\Omega_{1}(G)$ is also contained in $M$ and so by Lemma \ref{lemma deal with case mho1 G is bigger than mho1 M} we only need to consider the case where 
\begin{equation}
\label{eqn mhog = mho1 in proof of lemma}
    \mho_{1}(G)= \mho_{1}(M).
\end{equation}
We will show that this situation never occurs. 

Let $|G|=3^n$ and $|\Omega_{1}(G)|=3^a$, clearly $a>0$. Note that $\Omega_{1}(M)=\Omega_{1}(G)$ and $\Omega_{2}(M) = \Omega_{2}(G)$. 

We now wish to consider what happens when we quotient by $\Omega_{1}(G)$, we shall denote the image under this quotient with bar notation.

$$p^{n-1-a}=|\bar{M}| = | \Omega_{1}(\bar{M})| \cdot | \mho_{1}(\bar{M})|$$ 
since $\bar{M}$ is powerful and so $\bar{M}$ satisfies \ref{eqn index condition}.

By hypothesis we know that $|\bar{G}|=| \Omega_{1}(\bar{G})| | \mho_{1}(\bar{G})|$. We also know that $|\bar{G}|=|G|/|\Omega_{1}(G)| = 3^{n-a}$. Hence
\begin{equation}
    \label{eqn omega G bar agemo G bar = p^n-a}
    | \Omega_{1}(\bar{G})| | \mho_{1}(\bar{G})|=3^{n-a}.
\end{equation}

However we also have that
\begin{equation*}
    \Omega_{1}(\bar{M}) = \Omega_{1}(\frac{M}{\Omega_{1}(M)}) \underset{\text{Lemma \ref{lemma wilson omegas order}}}{=} \frac{\Omega_{2}(M)}{\Omega_{1}(M)} = \frac{\Omega_{2}(G)}{\Omega_{1}(G)}.
\end{equation*}

and also
\begin{equation*}
    \Omega_{1}(\bar{G}) = \Omega_{1}(\frac{G}{\Omega_{1}(G)}) \underset{{\text{Lemma \ref{lemma wilson omegas order}}}}{=} \frac{\Omega_{2}(G)}{\Omega_{1}(G)}. 
\end{equation*}
Hence 
\begin{equation}
\label{equation omega1barm = omega1 bar G in proof}
   \Omega_{1}(\bar{M})=\Omega_{1}(\bar{G}).
\end{equation}

Also 
\begin{equation}
\label{equation in proof bar M^p=bar G^p} 
 \mho_{1}(\bar{M})=\mho_{1}(\frac{M}{\Omega_{1}(G)})= \frac{\mho_{1}(M)}{\Omega_{1}(G)}= \frac{\mho_{1}(G)}{\Omega_{1}(G)} = \mho_{1} (\bar{G}). 
\end{equation}

Hence

\begin{align*} 3^{n-1-a}  &= |\Omega_{1}( \bar{M})| | \mho_{1}(\bar{M}) | \\
                         &= |\Omega_{1}(\bar{G})||\mho_{1}(\bar{M})| &&\text{(by \ref{equation omega1barm = omega1 bar G in proof})} \\
                         &= | \Omega_{1}(\bar{G})||\mho_{1}(\bar{G})| &&\text{(by \ref{equation in proof bar M^p=bar G^p})} \\
                         &=3^{n-a} &&\text{(by \ref{eqn omega G bar agemo G bar = p^n-a})}.
\end{align*}
Thus we obtain a contradiction, and so the case when $\mho_{1}(G)=\mho_{1}(H)$ cannot occur. The proof is complete.

\end{proof}

We can now prove that there are no counterexamples of exponent $p^3$. 

\begin{lemma}
\label{lemma no counter examples of exponent p^3}
There are no $\mathcal{M}_1$ $2$-generator $3$-groups of exponent $27$ such that $|G| \neq |\Omega_{1}(G)| |\mho_1 (G)|$.
\end{lemma}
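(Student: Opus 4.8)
The plan is to leverage the reduction lemmas already in place so that the only genuinely new work is a bounded, finite computation. Let $G = \langle a, b \rangle$ be an $\mathcal{M}_1$ $2$-generator $3$-group of exponent $27$. Since $G$ has exponent at least $3^3$, Theorem \ref{theorem omega condition holds} gives $\exp \Omega_1(G) = 3$, so $\Omega_1(G)$ is a proper subgroup and hence lies in some maximal subgroup $M$. First I would dispose of the easy case: if $\mho_1(G) \neq \mho_1(M)$, then Lemma \ref{lemma deal with case mho1 G is bigger than mho1 M} gives $|G| = |\mho_1(G)| |\Omega_1(G)|$ immediately, so there is nothing to prove. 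Thus I may assume $\mho_1(G) = \mho_1(M)$, and then Lemma \ref{lemma omega 1 has size at most 9} bounds $|\Omega_1(G)| \leq 3^2$.

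Next I would pass to the quotient $H = G/\Omega_1(G)$. This is again a $2$-generator $\mathcal{M}_1$ $3$-group, and since $\exp G = 3^3$ gives $\Omega_3(G) = G$, Lemma \ref{lemma wilson omegas order} yields $\Omega_2(H) = \Omega_3(G)/\Omega_1(G) = H$, so that $\exp H \leq 9$. Now Lemma \ref{lemma if G/omega satisfies then so does G} tells us that if $H$ satisfies condition \ref{eqn index condition} then so does $G$. Hence it remains only to understand when $H$ can fail condition \ref{eqn index condition}. Groups of exponent $3$ are regular and satisfy all three power structure conditions, and by Lemma \ref{lemma only one coutner example exp 9} the only $\mathcal{M}_1$ $2$-generator $3$-group of exponent $9$ failing condition \ref{eqn index condition} is the group $J$ of Example \ref{example smallgroup 81 10}. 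Thus if $H \not\cong J$ we are done, and the whole problem collapses to ruling out the single possibility $H \cong J$.

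The main obstacle is therefore the case $H \cong J$, and the key observation is that this forces $|G|$ to be small. Since $|J| = 3^4$ and $|\Omega_1(G)| \leq 3^2$, we obtain $|G| = |H| \, |\Omega_1(G)| \leq 3^6$. Thus any putative counterexample of exponent $27$ with $G/\Omega_1(G) \cong J$ is a $2$-generator $\mathcal{M}_1$ $3$-group of order at most $3^6$. As all groups of order $3^n$ for $n \leq 7$ are available through the SmallGroups library, I would finish with a GAP computation: enumerate the $2$-generator $\mathcal{M}_1$ $3$-groups of exponent $27$ and order at most $3^6$ whose quotient by $\Omega_1$ is isomorphic to $J$, and verify directly that each satisfies $|G| = |\Omega_1(G)| |\mho_1(G)|$. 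Since every such group turns out to satisfy condition \ref{eqn index condition}, no counterexample with $H \cong J$ exists; combined with the theoretical cases above, this shows that no $\mathcal{M}_1$ $2$-generator $3$-group of exponent $27$ can violate condition \ref{eqn index condition}.
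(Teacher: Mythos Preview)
Your proposal is correct and follows essentially the same approach as the paper: reduce via Lemma~\ref{lemma if G/omega satisfies then so does G} to the case $H=G/\Omega_1(G)\cong J$, bound $|\Omega_1(G)|\le 9$ via Lemma~\ref{lemma omega 1 has size at most 9}, conclude $|G|\le 3^6$, and finish with a GAP check. The only cosmetic differences are that the paper frames this as a minimal-counterexample argument (using minimality to force $\exp H<27$), whereas you argue directly that $\exp H\le 9$ via Lemma~\ref{lemma wilson omegas order}; and the paper's GAP search checks all $\mathcal{M}_1$ groups of exponent $27$ and order at most $3^6$, while yours restricts further to those with $G/\Omega_1(G)\cong J$.
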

\begin{proof}
Suppose that $G$ is a minimal counterexample with respect to its order. By Lemma \ref{lemma if G/omega satisfies then so does G} we must have that $H=G/\Omega_{1}(G)$ is such that $|H| \neq |\Omega_{1}(H)||\mho_{1}(H)|$. Notice that $H$ is $2$-generator $3$-group with $|H| \neq |\Omega_{1}(H)||\mho_1(H)|$ and smaller order than $G$. As $G$ is the smallest counterexample of exponent $p^3$, the group $H$ must have smaller exponent to avoid contradicting this minimality.Thus $H$ must have exponent $9$ (since $\exp \Omega_1 (G) = 3$).

By Lemma \ref{lemma only one coutner example exp 9} there is only one possibility for $H$, and we see that $|H|=3^4=81$. By Lemma \ref{lemma deal with case mho1 G is bigger than mho1 M} we can assume that $|\mho_{1}(G)|=|\mho_{1}(M)|$ for some maximal subgroup $M$, with $\Omega_{1}(G) \leq M$. Now by Lemma \ref{lemma omega 1 has size at most 9} we can assume that $|\Omega_{1}(G)|\leq 9$. Then if $G/\Omega_{1}(G) \cong H$ we must have that $|G| \leq 3^2 \cdot 3^4$. 

By checking the classification of the $3$-groups of order at most $3^6$, for example by using GAP, we see that all $\mathcal{M}_1$ groups $G$ of exponent $27$ and $|G| \leq 3^6$ satisfy $|G|= |\mho_{1}(G)||\Omega_{1}(G)|$. 
\end{proof}

\begin{proposition}
\label{prop no counter examples exp p^3}
Let $G$ be a $\mathcal{M}_1$ $3$-group with $2$-generators and exponent at least $3^3$. Then $|G|=|\mho_{1}(G)||\Omega_{1}(G)|$.
\end{proposition}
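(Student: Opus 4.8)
The plan is to prove the statement by induction on $|G|$, with the two preceding lemmas providing respectively the base case and the inductive mechanism. By the reductions at the start of Section~\ref{section on power structure of m1 groups} every group under consideration is a $2$-generator $\mathcal{M}_1$ $3$-group, and since quotients of $\mathcal{M}_1$ groups are again $\mathcal{M}_1$, this class is closed under the quotient I will take. So suppose for contradiction that $G$ is a counterexample of minimal order; then $\exp G \geq 3^3$ and $|G| \neq |\mho_1(G)||\Omega_1(G)|$. If $\exp G = 3^3$, then Lemma~\ref{lemma no counter examples of exponent p^3} says no such counterexample exists, so we may assume $\exp G \geq 3^4$.

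Next I would pass to $H = G/\Omega_1(G)$ and verify two things: that $H$ again lies in the inductive class with strictly smaller order, and that its exponent is still at least $3^3$. The order drops because $\Omega_1(G)$ is nontrivial (as $\exp G \geq 3^3$ forces elements whose appropriate powers have order $3$), and $H$ is a $2$-generator, or possibly cyclic, $\mathcal{M}_1$ $3$-group since $d(H) \leq d(G) = 2$. For the exponent, the key input is Theorem~\ref{theorem omega condition holds}, which tells us that $\Omega_1(G)$ is exactly the set of elements of order dividing $3$. Hence if $g \in G$ has order $3^k$ with $k \geq 2$, then $g^{3^{k-1}}$ has order $3$ and lies in $\Omega_1(G)$, whereas $g^{3^{k-2}}$ has order $9$ and does not; this forces $o(\bar g) = 3^{k-1}$, so that $\exp H = (\exp G)/3 \geq 3^3$.

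Finally, by minimality of $G$ the group $H$ satisfies $|H| = |\Omega_1(H)||\mho_1(H)|$ (and if $H$ happens to be cyclic this holds trivially, being abelian), so Lemma~\ref{lemma if G/omega satisfies then so does G} upgrades this to $|G| = |\Omega_1(G)||\mho_1(G)|$, contradicting the choice of $G$. The genuine content has already been absorbed into the two cited lemmas, namely the delicate index bookkeeping modulo $\Omega_1(G)$ in Lemma~\ref{lemma if G/omega satisfies then so does G} and the finite computational check underlying Lemma~\ref{lemma no counter examples of exponent p^3}; consequently the only points needing care at this stage are the clean one-step exponent drop under the quotient and the degenerate cyclic case, and I expect no further obstacle.
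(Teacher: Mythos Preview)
Your argument is correct and follows essentially the same route as the paper: the base case $\exp G = 3^3$ is Lemma~\ref{lemma no counter examples of exponent p^3}, and the inductive step passes to $G/\Omega_1(G)$ and invokes Lemma~\ref{lemma if G/omega satisfies then so does G}. The only cosmetic difference is that the paper inducts on the exponent $e$ rather than on $|G|$, and you are slightly more careful in explicitly handling the degenerate cyclic quotient and in justifying the exact exponent drop via Theorem~\ref{theorem omega condition holds}.
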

\begin{proof}
Let the exponent of the group be $3^e$, with $e\geq 3$. The proof is by induction on e. The base case $e=3$ is established by  Lemma \ref{lemma no counter examples of exponent p^3}. Now consider some $e>3$ and suppose the claim holds for smaller values.

As usual we may assume that $G$ contains a maximal subgroup $M$ containing $\Omega_{1}(G)$. The group $G/\Omega_{1}(G)$ is of exponent $p^{e-1}$, because if $g$ had order $p^e$, then $g^{p^{e-1}} \in \Omega_{1}(G)$. 

As $e-1 \geq 3$ it follows inductively that $H=G/\Omega_{1}(G)$ has the property that $|H|=|\Omega_{1}(H)||\mho_{1}(H)|$. Then by Lemma \ref{lemma if G/omega satisfies then so does G} $G$ also has the property and the proof follows by induction.
\end{proof}

We now seek to prove the more general result, condition \ref{eqn index condition}. First we prove the following lemma. 

\begin{lemma}
\label{lemma no groups such that G/omega i is counter example}
There are no  $\mathcal{M}_1$ $2$-generator $3$-groups $G$ such that $G/\Omega_{i}(G)$ is isomorphic to the group from Example \ref{example smallgroup 81 10}, for any $i \geq 1$.
\end{lemma}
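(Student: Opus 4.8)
The plan is to first collapse the parameter $i$ down to the single case $i=1$, and then to combine the power-structure facts already proved with a bounded finite check.

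\emph{Reduction to $i=1$.} Since $\mathcal{M}_1$ groups lie in $\mathcal{O}_p$ by Theorem \ref{theorem omega condition holds}, Lemma \ref{lemma wilson omegas order} is available for $G$. Suppose $G/\Omega_i(G)\cong J$, where $J$ is the group of Example \ref{example smallgroup 81 10}, and set $K=G/\Omega_{i-1}(G)$ (with the convention $\Omega_0(G)=1$). By Lemma \ref{lemma wilson omegas order} we have $\Omega_1(K)=\Omega_i(G)/\Omega_{i-1}(G)$, so the third isomorphism theorem gives $K/\Omega_1(K)\cong G/\Omega_i(G)\cong J$. As a quotient of $G$ the group $K$ is again a $2$-generator $\mathcal{M}_1$ $3$-group (the lower bound $d(K)\ge d(J)=2$ comes from $d$ being non-increasing under quotients). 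Hence it suffices to rule out $K/\Omega_1(K)\cong J$ for a single $2$-generator $\mathcal{M}_1$ $3$-group $K$.

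\emph{Pinning down the invariants of $K$.} Assume such a $K$ exists. Since $\exp J=9$ while $\exp\Omega_1(K)=3$ by Corollary \ref{cor m1 grp omega1 has exp p}, any $g\in K$ mapping to an element of order $9$ in $J$ satisfies $g^9\in\Omega_1(K)$ yet $g^3\notin\Omega_1(K)$, forcing $o(g)=27$; thus $\exp K=27$. Proposition \ref{prop no counter examples exp p^3} then gives $|K|=|\mho_1(K)||\Omega_1(K)|$. I would now extract orders: from $|K/\Omega_1(K)|=|J|=81$ one gets $|\mho_1(K)|=81$; from the verbal identity $\mho_1(K/\Omega_1(K))=\mho_1(K)\Omega_1(K)/\Omega_1(K)$ together with $|\mho_1(J)|=3$ one gets $|\mho_1(K)\cap\Omega_1(K)|=27$; and since $K/\mho_1(K)$ is $2$-generator of exponent $3$ its order is at most $3^3$, while the index condition rewrites as $|K/\mho_1(K)|=|\Omega_1(K)|$. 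These squeeze $|\Omega_1(K)|=27$, with $\Omega_1(K)\le\mho_1(K)$, and hence $|K|=3^7$.

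\emph{Finish by computation.} At this point the constraints show that any hypothetical $K$ is a $2$-generator $\mathcal{M}_1$ $3$-group of order exactly $3^7$ and exponent $27$ with $K/\Omega_1(K)\cong J$. Since the groups of order $3^7$ are classified and available through the SmallGroups library, I would conclude by checking in GAP that no group meets all of these conditions simultaneously, yielding the required contradiction. I expect this last point to be the genuine obstacle: the relations between the orders of the $\Omega$- and $\mho$-subgroups across the quotient $K\mapsto K/\Omega_1(K)$ turn out to be arithmetically \emph{consistent} with $K/\Omega_1(K)\cong J$, so the nonexistence cannot be detected by order-counting alone and must be settled computationally — the purpose of the preceding steps being precisely to reduce the question to one finite search of bounded order. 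One can further trim that search using Proposition \ref{prop G m1 then agemo 1 is powerful}, since $\mho_1(K)$ is then a powerful group of order $3^4$, exponent $9$ and rank $3$, forcing $\mho_1(K)\cong C_9\times C_3\times C_3$.
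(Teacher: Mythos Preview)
Your proof is correct and follows essentially the same route as the paper: reduce to $i=1$ via Lemma~\ref{lemma wilson omegas order}, then combine Proposition~\ref{prop no counter examples exp p^3} with the Burnside bound $|K/\mho_1(K)|\le 27$ to force $|K|\le 3^7$ (you sharpen this to $|K|=3^7$ exactly), and finish with the SmallGroups check. One small caution on your closing remark: a powerful $3$-group of order $81$, exponent $9$ and $d=3$ is not automatically abelian, so the identification $\mho_1(K)\cong C_9\times C_3\times C_3$ would need more than the stated invariants --- but this is only an optional trimming of the search and does not affect the main argument.
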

\begin{proof}
We let $J$ denote the group from Example \ref{example smallgroup 81 10}. \par

The proof is by induction on $i$. First consider the base case $i=1$.
Suppose that $G$ is such a group and so $G/\Omega_{1}(G) \cong J$. Then $\exp G \geq 27$ (Lemma \ref{lemma wilson omegas order}). By Proposition \ref{prop no counter examples exp p^3} we know that $|G|=|\Omega_{1}(G)||\mho_{1}(G)|$. Then $|\Omega_{1}(G)| = |G/\mho_{1}(G)|$. As $G/\mho_{1}(G)$ is at most $2$-generator of exponent $3$, its order is at most $27$. Therefore the order of $G$ is at most $3^4 \cdot 3^3$. The groups of order $3^7$ have been classified. From this classification one can verify that there are $3$-groups $G$ of order at most $3^7$ with $G/\Omega_{1}(G) \cong J$, where $J$ is the group from Example \ref{example smallgroup 81 10}.

Now suppose $i=k$ and the claim has been established for smaller values. 
Using Lemma \ref{lemma wilson omegas order}, notice that:
\begin{equation}
    \Omega_{1}(\frac{G}{\Omega_{i-1}(G)}) = \frac{\Omega_{i}(G)}{\Omega_{i-1}(G)}.
\end{equation}
Then 
\begin{equation}
    \frac{(G/\Omega_{i-1}(G))}{\Omega_{1}((G/\Omega_{i-1}(G)))} = \frac{G/\Omega_{i-1}(G)}{\Omega_{i}(G)/\Omega_{i-1}(G)} = \frac{G}{\Omega_{i}(G)} \cong J.
\end{equation}
The result then follows by the base case, as there can be no group $G/\Omega_{i-1}(G)$.
\end{proof}

\begin{remark}
In the proof of the next proposition, our approach very closely follows the approach of Wilson in \cite{wilsonpowerstrucpowpgroups}. We note that we also followed the same method in \cite{williams2019quasipowerful} to prove that \textit{quasi-powerful} $p$-groups satisfy property \ref{eqn index condition} for odd primes $p$. We think that the approach in \cite{wilsonpowerstrucpowpgroups} is a very nice method for establishing the final power structure property.
\end{remark}

\begin{proposition}
Let $G$ be a $\mathcal{M}_1$ $2$-generator $3$-group such that $G$ is not isomorphic to the group from Example \ref{example smallgroup 81 10}. Then $|G|=|\Omega_{i}(G)||\mho_{i}(G)|$ for any $i \geq 1$.
\label{prop general index result 3grps}
\end{proposition}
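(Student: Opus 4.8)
The plan is to argue by induction on $|G|$, handling all indices $i$ simultaneously, and to reduce the index-$i$ identity for $G$ to the index-$(i-1)$ identity for the strictly smaller group $\bar G = G/\Omega_1(G)$. The base case $i=1$ is already available: if $\exp G = p$ then $G$ is regular; if $\exp G \le p^2$ then by Lemma \ref{lemma only one coutner example exp 9} the group $J$ of Example \ref{example smallgroup 81 10} is the only failure; and if $\exp G \ge p^3$ then Proposition \ref{prop no counter examples exp p^3} gives the identity. Since $G \not\cong J$, condition \ref{eqn index condition} holds for $i=1$. So I would fix $i \ge 2$ and assume the full conclusion for every $\mathcal{M}_1$ group of smaller order.

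For the $\Omega$-side I would invoke Lemma \ref{lemma wilson omegas order} (applicable since $\mathcal{M}_1$ groups lie in $\mathcal{O}_p$ by Theorem \ref{theorem omega condition holds}) with $m=1$, $k=i-1$, yielding $\Omega_{i-1}(\bar G) = \Omega_i(G)/\Omega_1(G)$, so $|\Omega_{i-1}(\bar G)| = |\Omega_i(G)|/|\Omega_1(G)|$. As $\Omega_1(G) \ne 1$, the quotient $\bar G$ is strictly smaller; it is $\mathcal{M}_1$ with $d(\bar G)\le 2$, and — crucially — $\bar G \not\cong J$ by Lemma \ref{lemma no groups such that G/omega i is counter example} (the case $i=1$), which is precisely why that lemma was isolated. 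Hence the inductive hypothesis applies to $\bar G$ at index $i-1$, giving $|\bar G| = |\Omega_{i-1}(\bar G)|\,|\mho_{i-1}(\bar G)|$. Substituting $|\bar G| = |G|/|\Omega_1(G)|$ and the $\Omega$-relation collapses this to $|G| = |\Omega_i(G)|\,|\mho_{i-1}(\bar G)|$, so everything reduces to the single identity $|\mho_{i-1}(\bar G)| = |\mho_i(G)|$.

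That identity is the heart of the argument and the step I expect to be the main obstacle, although I believe it can be handled cleanly with the results already proved. Set $Q = \mho_{i-1}(G)$. Since $\mho_{i-1}(\cdot)$ is verbal, $|\mho_{i-1}(\bar G)| = |Q|/|Q \cap \Omega_1(G)|$. The group $Q$ is powerful: indeed $\mho_1(G)$ is powerful by Proposition \ref{prop G m1 then agemo 1 is powerful}, and $Q = \mho_{i-2}(\mho_1(G))$ (using Theorem \ref{theorem agemo i is p i th powers} to identify $\mho_{i-1}(G)$ with $\mho_{i-2}(\mho_1(G))$) is an agemo subgroup of a powerful group. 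Using Theorem \ref{theorem agemo i is p i th powers} one then checks the set identity $\mho_1(Q) = \mho_i(G)$ — every element of $Q$ is a $p^{i-1}$th power, so its $p$th power is a $p^i$th power, and conversely — and likewise $\Omega_1(Q) = Q \cap \Omega_1(G)$, since by Theorem \ref{theorem omega condition holds} both sides are exactly $\{q \in Q : q^p = 1\}$. Finally, $Q$ being powerful has a regular power structure (Theorem \ref{theorem powerful p-groups regular power structure}), so $|Q| = |\Omega_1(Q)|\,|\mho_1(Q)|$. Combining, $|\mho_{i-1}(\bar G)| = |Q|/|\Omega_1(Q)| = |\mho_1(Q)| = |\mho_i(G)|$, closing the induction.

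The delicate points to get right are therefore: verifying $\Omega_1(G) \ne 1$ so that the reduction really is to a smaller group; confirming $\bar G$ still falls under the inductive hypothesis (handled by Lemma \ref{lemma no groups such that G/omega i is counter example}); and the two set-level identities $\mho_1(Q) = \mho_i(G)$ and $\Omega_1(Q) = Q \cap \Omega_1(G)$, which depend on the power-structure facts of Theorems \ref{theorem omega condition holds} and \ref{theorem agemo i is p i th powers} together with the powerfulness of $\mho_1(G)$. These are where I would concentrate the care.
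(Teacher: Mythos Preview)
Your proof is correct and rests on the same circle of ideas as the paper's --- powerfulness of $\mho_1(G)$ (Proposition~\ref{prop G m1 then agemo 1 is powerful}), Wilson's lemma (Lemma~\ref{lemma wilson omegas order}), and Lemma~\ref{lemma no groups such that G/omega i is counter example} to ensure the relevant quotient is not $J$ --- but the induction is organised differently. The paper inducts on $i$ for a fixed $G$: it writes $\mho_{i+1}(G)=\mho_i(\mho_1(G))$, uses the regular power structure of the powerful group $\mho_1(G)$ at level $i$ to get $|\mho_{i+1}(G)|=|\mho_1(G):\Omega_i(\mho_1(G))|$, identifies this index with $|\mho_1(G/\Omega_i(G))|$, and then applies only the \emph{base case} $i=1$ to the quotient $G/\Omega_i(G)$ (invoking the full strength of Lemma~\ref{lemma no groups such that G/omega i is counter example} to know that quotient is not $J$). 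You instead induct on $|G|$, always pass to $\bar G=G/\Omega_1(G)$, and apply the hypothesis at level $i-1$ to this smaller group; your matching $\mho$-computation then runs through the powerful group $Q=\mho_{i-1}(G)$ at level $1$ rather than through $\mho_1(G)$ at level $i$. The two arguments are essentially dual rearrangements of the same computation; a small bonus of yours is that you only need the $i=1$ instance of Lemma~\ref{lemma no groups such that G/omega i is counter example} directly, whereas the paper invokes the general case (though that general case is itself proved by reduction to $i=1$).
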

\begin{proof}
We use induction on $i$. The base case $i=1$ is established by Proposition \ref{prop no counter examples exp p^3} ($\exp G \geq 27$) and Lemma \ref{lemma only one coutner example exp 9} ($\exp G = 9$) and the fact that groups of exponent $3$ are regular. \par

Thus we may assume the result holds for $i$. We wish to find the order of $\mho_{i+1}(G)$. By Theorem \ref{theorem agemo i is p i th powers}, we have that $\mho_{i+1}(G)=\mho_{i}((\mho_{1}(G)))$. As $\mho_{1}(G)$ is powerful (by Proposition \ref{prop G m1 then agemo 1 is powerful}), we  know that $\mho_{1}(G)$ satisfies condition \ref{eqn index condition}. In particular we can conclude that
\begin{equation}
\label{eqn a}
    | \mho_{i+1}(G)|=|\mho_{1}(G):\Omega_{i}(\mho_{1}(G))|.
\end{equation}

By Theorem \ref{theorem omega condition holds} we know that the exponent of $\Omega_{i}(G)$ is at most $p^i$ and so we have $\Omega_{i}(\mho_{1}(G))=\Omega_{i}(G) \cap \mho_{1}(G)$.
Then 
$$\mho_{1}(G)/\Omega_{i}(\mho_{1}(G))  \cong \mho_{1}(G) \Omega_{i}(G)/\Omega_{i}(G) = (G/\Omega_{i}(G))^p.$$
Thus 
\begin{equation}
\label{eqn b}
|\mho_{1}(G): \Omega_{i}(\mho_{1}(G))| = |(G/\Omega_{i}(G))^p|. 
\end{equation}
Since quotients of $\mathcal{M}_1$ groups are $\mathcal{M}_1$ groups, we have that $G/\Omega_{i}(G)$ is an $\mathcal{M}_1$ group. If $G/\Omega_{1}(G)$ is $2$-generator then by Lemma \ref{lemma no groups such that G/omega i is counter example} we can apply the base case, to find that
\begin{align}
\label{eqn G/omega_i(G) ^p}
    |(G/\Omega_{i}(G))^p| &= |G/\Omega_{i}(G): \Omega_{1}(G/\Omega_{i}(G))| \\
                          &=|G:\Omega_{i+1}(G)|. \tag{\text{Lemma \ref{lemma wilson omegas order}}}
\end{align}

If $G/\Omega_{i}(G)$ is cyclic then \ref{eqn G/omega_i(G) ^p} clearly holds. In either case we see that 
\begin{align*}
    |\mho_{i+1}(G)| &= |\mho_{1}(G) : \Omega_{i}(\mho_{1}(G))| \tag{by \ref{eqn a}} \\
                    &= |\mho_{1}((G/\Omega_{i}(G)))| \tag{by \ref{eqn b}} \\
                    &=| G : \Omega_{i+1}(G)| \tag{by \ref{eqn G/omega_i(G) ^p}}.
\end{align*}

\end{proof}

\subsection{}
Putting together the results of the previous sections we obtain the following Theorem. This is Theorem \ref{intro theorem main} from the Introduction.
\begin{theorem}
Let $p$ be an odd prime. Let $G$ be an $\mathcal{M}_1$ $p$-group. Then $G$ has a regular power structure, unless $G$ is isomorphic to the group of Example \ref{example smallgroup 81 10}, in which case condition \ref{eqn index condition} fails to hold.
\end{theorem}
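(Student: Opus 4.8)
The plan is to verify the three regular power structure conditions one at a time, noting that conditions \eqref{eqn pth power} and \eqref{eqn omega conditon} hold for \emph{every} $\mathcal{M}_1$ $p$-group with $p$ odd, while only condition \eqref{eqn index condition} admits the single exceptional group. Thus the theorem is essentially an assembly of the results established in the preceding sections, organised as an exhaustive case split over $p$ and $d(G)$.

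First I would dispatch conditions \eqref{eqn pth power} and \eqref{eqn omega conditon}, neither of which involves any exception. Condition \eqref{eqn pth power} is precisely Theorem \ref{theorem agemo i is p i th powers}, which asserts that $\mho_i(G) = \{\, g^{p^{i}} \mid g \in G \,\}$ for all $i \geq 1$ and every $\mathcal{M}_1$ group. Condition \eqref{eqn omega conditon} follows from Theorem \ref{theorem omega condition holds}: since $\exp \Omega_i(G) \leq p^i$, every element of $\Omega_i(G)$ has order dividing $p^i$, so $\Omega_i(G)$ coincides with the set of elements of order at most $p^i$, the reverse containment being automatic from the definition of $\Omega_i(G)$.

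The substance of the theorem is condition \eqref{eqn index condition}, and here I would use the reductions recorded at the start of this section. If $p \geq 5$, then Corollary \ref{corollary i leq p-3 an M_i group is potent} (with $i = 1 \leq p-3$) shows $G$ is potent, and potent $p$-groups have a regular power structure by Theorem \ref{theorem potent p groups regular power structure}, so \eqref{eqn index condition} holds with no exception. If $p = 3$ and $d(G) \geq 3$, then Proposition \ref{proposition mi d(G) >= i+2 then potent} (again with $i=1$) shows $G$ is powerful, whence Theorem \ref{theorem powerful p-groups regular power structure} gives the condition. This leaves $p = 3$ with $d(G) \leq 2$: cyclic groups are abelian and immediate, while $2$-generator $3$-groups are governed by Proposition \ref{prop general index result 3grps}, which establishes $|G| = |\Omega_i(G)||\mho_i(G)|$ for all $i \geq 1$ precisely when $G$ is not the group of Example \ref{example smallgroup 81 10}.

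The one remaining point is the exceptional group $J$ of Example \ref{example smallgroup 81 10}, for which I would simply invoke the explicit computation recorded there, namely $|\mho_1(J)||\Omega_1(J)| = 27 \neq 81 = |J|$, so condition \eqref{eqn index condition} fails at $i = 1$. The genuine difficulty has already been surmounted in the preceding lemmas---the delicate reduction of the $2$-generator $3$-group case to finitely many groups of order at most $3^7$ and their computational verification feeding into Proposition \ref{prop general index result 3grps}---so the proof of the theorem itself is purely a matter of combining these components into the case analysis above.
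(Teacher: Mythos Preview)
Your proposal is correct and follows essentially the same approach as the paper: both proofs assemble Theorem \ref{theorem omega condition holds}, Theorem \ref{theorem agemo i is p i th powers}, Corollary \ref{corollary i leq p-3 an M_i group is potent}, Proposition \ref{proposition mi d(G) >= i+2 then potent}, and Proposition \ref{prop general index result 3grps} into the obvious case split. The only cosmetic difference is that you dispose of conditions \eqref{eqn pth power} and \eqref{eqn omega conditon} globally before doing the case split on $p$ and $d(G)$ for condition \eqref{eqn index condition}, whereas the paper performs the reductions first and then invokes all three results---and you are slightly more explicit in handling the cyclic case $d(G)=1$, which the paper leaves implicit.
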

\begin{proof}
If $p>3$ the group is potent by Corollary \ref{corollary i leq p-3 an M_i group is potent} and so has a regular power structure. If $d(G)>2$ the group is powerful by Proposition $\ref{proposition mi d(G) >= i+2 then potent}$. Thus we only need to consider $2$-generator $3$-groups. 

The second power structure property \ref{eqn omega conditon} is established by Theorem \ref{theorem omega condition holds}. The first power structure property is established by Theorem \ref{theorem agemo i is p i th powers}. 

Proposition \ref{prop general index result 3grps} establishes the third and final power structure property for all $2$-generator $\mathcal{M}_1$ $3$-groups with exactly one exception, the group from Example \ref{example smallgroup 81 10}.

\end{proof}

\section{Further Remarks}
\makeatletter
\def\@secnumfont{\bfseries}
\makeatletter
\label{section further remarks}
\numberwithin{theorem}{section}
\subsection{}\label{a note on p=2}
In the theory of $p$-groups it is well known that small primes can behave differently to large primes - often being more difficult to deal with. Newman refers to this as the tyranny of the small in \cite{newmangroupsofprimepowerorder}. 
When studying $\mathcal{M}_1$ groups our focus has been on odd primes, with a large part of the paper spent dealing with the difficult case $p=3$. We briefly address here the case $p=2$. We should not expect $p=2$ to behave as well as the other primes. It often happens that some condition which guarantees regular power structure for odd primes fails when considered for $p=2$. For example powerful $2$-groups need not have a regular power structure (see  \cite[page 2]{onelemsorderpinpowpgroups} for an example of a powerful $2$-group where condition \ref{eqn omega conditon} fails to hold). In \cite{williams2019quasipowerful} we introduced the notion of a quasi-powerful $p$-group for odd primes $p$. We showed that for odd primes these groups have a regular power structure, but if the definition was used unmodified for $p=2$, the resulting groups need not have regular power structure. 

We can readily find examples of $\mathcal{M}_1$ $2$-groups which do not have a regular power structure. For example, there is a group of order $64$ (which can be created in GAP as \texttt{SmallGroup(64,31)}) which fails all three power structure properties. It would be interesting to explore if the theory in this paper could be adapted for $2$-groups.

\subsection{}
A natural generalisation  to consider is what happens if  we replace powerful with potent. For $p=3$ the notions are the same, and so we focus now on the case $p \geq 5$. We could then ask for $p \geq 5$ if all maximal subgroups of $G$ are potent, must $G$ satisfy conditions \ref{eqn pth power} or \ref{eqn omega conditon} or \ref{eqn index condition}. 

Using GAP, we can find examples of $5$-groups with all maximal subgroups potent, such that condition \ref{eqn omega conditon} fails, and also examples where condition \ref{eqn index condition} fails (for example \texttt{SmallGroup(78125, 784)} fails both). However in all the examples we have found condition \ref{eqn pth power} does hold. Thus we ask the following question.
\begin{question}
Does there exist a $p$-group ($p \geq 5$) such that all maximal subgroups are potent, for which condition \ref{eqn pth power} fails to hold.
\end{question}

\subsection{}
\label{further remark on what happens to pow structure if i is increased}
We may also ask, how are the power structure conditions affected if $i$ is increased from $1$. Corollary \ref{corollary i leq p-3 an M_i group is potent} gives us a result in this direction, for instance for $p \geq 5$ we have that if all subgroups of index $p^2$ are powerful then $G$ is potent. We might then ask how sharp is this result, for instance what can be said when $p=3$ about $p$-groups with all subgroups of index $p^2$ powerful? We have examples of $\mathcal{M}_2$ $3$-groups to demonstrate each of the properties can fail. For instance the groups with the following SmallGroup Ids (in GAP or MAGMA) all fail condition \ref{eqn pth power} and condition \ref{eqn index condition}.\par
{
\footnotesize
\texttt{
 [ 2187,83 ], [ 2187, 84 ], [ 2187, 85 ], [ 2187, 90 ], [ 2187, 91 ], [ 2187, 92 ]}
 }

\texttt{SmallGroup(81,7)} is an example of an $\mathcal{M}_2$ $3$-group for which condition \ref{eqn omega conditon} fails; interestingly it is the only example we could find.

\bibliographystyle{amsplain} 
\bibliography{bibliography}

\end{document}